\documentclass[11pt]{article}

\usepackage{amsfonts, amssymb, amsmath, verbatim, amsthm, enumerate, mathtools}

\usepackage{graphicx}
\usepackage{framed}
\usepackage{tikz}
\usepackage{enumitem}
\usepackage{bbm}
\usepackage{ascmac,amscd,eucal}

\usepackage{hyperref}
\usepackage{setspace}

\usepackage{geometry}
\newcommand{\brac}[1]{\left(#1\right)}
\newcommand{\scalar}[1]{\left \langle #1 \right \rangle}

\renewcommand{\S}{\mathbb{S}}
\newcommand{\eps}{\varepsilon}

\newcommand{\Id}{\mathrm{Id}}
\newcommand{\tr}{\mathrm{tr}}

\newcommand{\bsig}{\overline{\Sigma}}
\DeclareMathOperator{\diag}{diag}
\DeclareMathOperator{\Span}{span}

\newtheorem{theorem}{Theorem}[section]
\newtheorem{lemma}[theorem]{Lemma}
\newtheorem{proposition}[theorem]{Proposition}

\newtheorem{corollary}[theorem]{Corollary}
\theoremstyle{definition}

\theoremstyle{definition}
\newtheorem{remark}[theorem]{Remark}

\newcommand{\vertiii}[1]{{\left\vert\kern-0.25ex\left\vert\kern-0.25ex\left\vert #1 
\right\vert\kern-0.25ex\right\vert\kern-0.25ex\right\vert}}
\newcommand{\R}{{\mathbb R}}

\numberwithin{equation}{section}

\def\1{\textbf{\rm 1}}

\def\XXint#1#2#3{{\setbox0=\hbox{$#1{#2#3}{\int}$}
\vcenter{\hbox{$#2#3$}}\kern-.5\wd0}}

\makeatletter
\@namedef{subjclassname@2020}{\textup{2020} Mathematics Subject Classification}
\makeatother

\title{Gaussian inequalities with different covariance matrices -- the Heterovariant Gaussian Brunn-Minkowski inequality}

\date{ }

\author{
  Emanuel Milman\thanks{
    Department of Mathematics, Technion-Israel Institute of Technology, Haifa 32000, Israel. 
    Email: emilman@tx.technion.ac.il
  }
  \and
  Shohei Nakamura\thanks{
    School of Mathematics, The Watson Building, University of Birmingham, Edgbaston, Birmingham, B15 2TT, England. 
    Email: s.nakamura@bham.ac.uk
  }
  \and
  Hiroshi Tsuji\thanks{
    Department of Mathematics, Institute of Science Tokyo, 2-12-1 Ookayama, Meguro-ku, Tokyo 152-8551, Japan
    Email: tsujihiroshi@math.sci.isct.ac.jp
  }
}

\begin{document}

\maketitle

\begin{abstract}
We establish the following novel heterovariant Gaussian Brunn--Minkowski inequality (involving general covariance matrices $\Sigma_1,\Sigma_2,\Gamma$) for origin-symmetric convex sets $K,L \subset \R^n$ and $p,\lambda \in (0,1)$:
\[
    \gamma_{\Sigma_1}(K)^{1-p} \gamma_{\Sigma_2}(L)^p
    \le
    \left(  \left( \frac{1-p}{1-\lambda} \right)^{1-p}  \left( \frac{p}{\lambda} \right)^p \right)^{n}
    \gamma_{\Gamma}((1-\lambda) K + \lambda L) ,
\]
whenever $\Gamma \leq \Sigma_1 \sharp_p \Sigma_2$, the $p$-geometric mean of $\Sigma_1,\Sigma_2$. 
This immediately implies both the dimensional Gaussian Brunn--Minkowski inequality of Eskenazis--Moschidis (by using $\Sigma_1 = \Sigma_2 = \Gamma$ and optimizing over $p$) and the strong Gaussian (B)-theorem of Cordero-Erausquin--Fradelizi--Maurey (by using $K=L$, $p=\lambda$ and diagonal covariances), fusing them into a single unified statement. In fact, the inequality applies to convex sets whose appropriate Gaussian barycenter is at the origin. Furthermore, it reveals a new \emph{bilinear} version of the strong Gaussian (B)-theorem (involving \emph{two} convex sets); in particular, for all $a,b > 0$ and $\lambda \in [0,1]$:
\[
\gamma(a K)^{1-\lambda} \gamma(b L)^{\lambda} \leq \gamma\brac{a^{1-\lambda} b^\lambda \brac{(1-\lambda) K + \lambda L}} .
\]
A more general heterovariant version of the (bilinear) (B)-theorem replaces dilation by general linear transformations. 
A different version yields a heterovariant form of the recently established Gaussian conjugate Rogers--Shephard inequality. 
By averaging these Gaussian inequalities with respect to the covariance matrices, we obtain dimensional Brunn--Minkowski and conjugate Rogers--Shephard inequalities for Gaussian mixtures. The proofs are based on the Gaussian Forward-Reverse Brascamp--Lieb inequality established in our previous work.

\end{abstract}

\begingroup
\renewcommand{\thefootnote}{}
\footnotetext{  \textit{2020 Mathematics Subject Classification:}
  Primary 52A40; Secondary 60E15, 60G15.

    \noindent\textit{Keywords:}
  Covariance, Gaussian Brunn--Minkowski inequality, (B)-theorem, Gaussian Conjugate Rogers--Shephard inequality, Gaussian mixtures, Forward-Reverse Brascamp--Lieb inequality

    \textit{}
  The research leading to these results is part of a project that has received funding from the European Research Council (ERC) under the European Union's Horizon 2020 research and innovation programme (grant agreement No 101001677), and is also supported by JSPS Kakenhi grant number 24KJ0030 (Tsuji).
}
\endgroup

\section{Introduction}

Many fundamental geometric inequalities in convex geometry compare the Lebesgue measures of sets obtained through operations such as Minkowski addition and intersection. In modern convex geometry, it is also natural to replace Lebesgue measure by other measures and ask whether analogous geometric inequalities continue to hold. This point of view has led to the study of a wide variety of geometric inequalities for different classes of measures.
Among them, the standard Gaussian measure in $\R^n$,
$
d\gamma=(2\pi)^{-\frac n2} e^{-\frac12 |x|^2}\, dx ,
$
is one of the most fundamental and important examples, and numerous Gaussian inequalities have been considered. 

A well-known consequence of the Pr\'ekopa--Leindler inequality is the following dimension-free Gaussian Brunn-Minkowski inequality: 
\begin{equation} \label{e:PL}
\gamma(K)^{1-\lambda} \gamma(L)^{\lambda} \le \gamma((1-\lambda) K + \lambda L) 
\end{equation}
for all $\lambda \in [0,1]$ and convex sets $K,L \subset \R^n$ (in fact, convexity is not needed). Here, for $a,b \in \R$ and $K,L \subset \R^n$, we write
$
aK + bL := \{ a x + b y\, :\, x \in K, y \in L\}
$
for their Minkowski combination. By using appropriate translations and a scaling-limit, the Gaussian inequality recovers the classical dimension-free Brunn--Minkowski inequality, where $\gamma$ above is replaced by the Lebesgue measure \cite{AGA-Book-I}. 

In recent years, it has been realized that a richer family of inequalities can hold when restricting to origin-symmetric convex sets $K=-K$ and $L=-L$, or more generally, to appropriately centered convex sets. A representative example is the dimensional Gaussian Brunn--Minkowski inequality, conjectured by Gardner--Zvavitch \cite{GZ} and finally established by Eskenazis--Moschidis \cite{EM} (following the approach of \cite{EMilmanKolesnikovAJM, EMilmanKolesnikovMemoirs, KolesnikovLivshyts}) for origin-symmetric convex sets $K,L \subset \R^n$:
\begin{equation}\label{e:GZ}
(1-\lambda) \gamma(K)^{1/n} + \lambda \gamma(L)^{1/n}
\le
\gamma((1-\lambda) K + \lambda L)^{1/n}. 
\end{equation}
See also \cite{AishwaraRotem,AishwaraLi} for additional extensions. 
Recently, Han--Zhu \cite{HZ} extended (\ref{e:GZ}) to convex sets  $K,L\subset \R^n$  whose Gaussian barycenters coincide: $\frac{1}{\gamma(K)} \int_K x \, d\gamma(x) = \frac{1}{\gamma(L)} \int_L x\, d\gamma(x)$. 
Throughout this work, we will not attempt to treat the most general case when Gaussian barycenters coincide, but rather restrict to the case when the appropriate Gaussian barycenters are both at the origin, e.g.~ $\int_K x \, d\gamma(x) = \int_L x\, d\gamma(x) =0$.
Again, by using appropriate translations and a scaling-limit, the Gaussian inequality (\ref{e:GZ}) recovers the classical dimensional Brunn--Minkowski inequality, where $\gamma$ is replaced by the Lebesgue measure.

The dimensional Gaussian Brunn--Minkowski inequality \eqref{e:GZ} can be understood as a comparison between the ``sizes'' of the convex sets $K$ and $L$ and that of their Minkowski combination $(1-\lambda)K+\lambda L$, where all three sets are measured with respect to Gaussian measures having the same covariance structure. Indeed, for an $n\times n$ positive definite symmetric matrix $\Sigma$, let the Gaussian measure on $\R^n$ with covariance matrix $\Sigma$ be defined by
$$
d\gamma_\Sigma := ({\rm det} (2\pi \Sigma))^{-\frac 12} e^{-\frac12 \langle \Sigma^{-1}x, x \rangle}\, dx.
$$
Then \eqref{e:GZ} may equivalently be written as
\begin{equation}\label{e:GZ2}
(1-\lambda) \gamma_\Sigma(K)^{1/n} + \lambda \gamma_\Sigma(L)^{1/n}
\le
\gamma_\Sigma((1-\lambda) K + \lambda L)^{1/n}
\end{equation}
for all $\lambda \in [0,1]$ and convex sets $K,L\subset \R^n$ with $\int_K x \, d\gamma_\Sigma(x) = \int_L x\, d\gamma_\Sigma(x)=0$.
This naturally raises the question of whether an analogous geometric inequality remains valid when the ``sizes'' of the three convex sets $K$, $L$, and $(1-\lambda)K+\lambda L$ are measured with respect to Gaussian measures having different covariance matrices. More precisely, given $n\times n$ positive definite symmetric matrices $\Sigma_1,\Sigma_2,\Gamma$, convex sets $K,L\subset\R^n$, and $\lambda\in(0,1)$, one may ask under which conditions
$$
(1-\lambda) \gamma_{\Sigma_1}(K)^{1/n} + \lambda \gamma_{\Sigma_2}(L)^{1/n}
\le
\gamma_{\Gamma}((1-\lambda) K + \lambda L)^{1/n}
$$
holds.
This question is not specific to the dimensional Gaussian Brunn--Minkowski inequality, but arises naturally for additional Gaussian inequalities as well, to be discussed below.

There are several reasons for studying Gaussian inequalities involving different covariance matrices. The first is to understand how the validity of such inequalities depends on the covariance matrix. A striking example is provided by the strong (B)-theorem of Cordero-Erausquin--Fradelizi--Maurey \cite{C-EFM}. It states that
\begin{equation} \label{e:StrongB}
\R^n \ni (s_1, \dots, s_n) \mapsto \gamma({\rm diag}(e^{s_1}, \dots, e^{s_n}) K) \text{ is log-concave}
\end{equation}
for any origin-symmetric convex set $K = -K$ in $\R^n$. 
Cordero-Erausquin--Rotem \cite{C-ER} investigated the strong (B)-property for general Gaussian measures and showed that the corresponding property can fail in general, even when the covariance matrix is arbitrarily close to that of the standard Gaussian. Thus, changing the covariance matrix can affect whether a Gaussian inequality remains valid. It is therefore natural to ask which Gaussian properties continue to hold and which matrix conditions guarantee them.

A second reason pertains to Gaussian mixtures. Given a probability measure $\nu$ on $(0, \infty)^n$, let us consider the Gaussian mixture $\gamma_\nu$ on $\R^n$ given by
\begin{equation} \label{e:GaussMix}
\gamma_\nu
=
\int_{(0, \infty)^n}
\gamma_{\Delta(t)} \,d\nu(t)  \quad,\quad \Delta(t) := {\rm diag}(t_1,\ldots,t_n) . 
\end{equation}
This construction gives a class of distributions broader than the class of individual Gaussian measures.
For instance, it is known that measures corresponding to symmetric $p$-stable random variables with $p \in (0,2]$ are Gaussian mixtures. In particular, the Cauchy distribution is a Gaussian mixture. 
Eskenazis--Nayar--Tkocz \cite{ENT} studied entropy and moment inequalities for Gaussian mixtures, as well as extensions of the (B)-theorem and the Gaussian correlation inequality. Eskenazis--Gavalakis \cite{EG} also studied geometric aspects of more general multivariate Gaussian mixtures. Inequalities relating Gaussian measures with different covariances can serve as pointwise ingredients for establishing inequalities for Gaussian mixtures. More precisely, a typical strategy is to first establish an inequality for each choice of covariance parameters, and then integrate it with respect to the mixing measure, using an appropriate functional inequality in the covariance variables.

One last reason, which we did not a priori expect but realized along our investigation, is that using different covariances actually reveals a common ancestral inequality, which underlies and implies the seemingly unrelated inequalities discussed above. We commence by formulating it below. 

\subsection{The Heterovariant Gaussian Brunn--Minkowski Inequality}

For $n \in \mathbb{N}$, let $\mathcal{P}_n$ be the set of all $n \times n$ positive definite symmetric matrices.  For $A \in \mathcal{P}_n$ and a linear subspace $E \subseteq \R^n$, let us denote $A|_E:= P_E A P_E^*$ where $P_E:\R^n \to E$ is the orthogonal projection onto $E$.  When $E = \{0\}$, we use the convention that $\det A|_E = 1$. The Lebesgue measure of a measurable $\Omega \subset E$ is denoted by $|\Omega|$. 

Our first main theorem reads as follows. 

\begin{theorem}[Heterovariant Gaussian Brunn--Minkowski Inequality] \label{t:Unify}
    Let $\Sigma_1, \Sigma_2, \Gamma \in \mathcal{P}_n$ and $p, \lambda \in (0,1)$. 
    Then for any convex sets $K,L \subset \R^n$ satisfying 
     \begin{equation}\label{e:Centered}
    \int_{K} x \, d\gamma_{\Sigma_1}(x) = \int_{L} x \, d\gamma_{\Sigma_2}(x) = 0, 
    \end{equation}
    it holds that
    \begin{equation}\label{e:FullHGBM}
\gamma_{\Sigma_1}(K)^{1-p} \gamma_{\Sigma_2}(L)^p \le C(p,\lambda,\boldsymbol{\Sigma},\Gamma) \gamma_{\Gamma} ( (1-\lambda) K+ \lambda L ), 
\end{equation}
where 
\begin{equation}\label{e:FullHGBMCoeff}
C(p,\lambda,\boldsymbol{\Sigma},\Gamma) 
:= 
\max_{E\subseteq\R^n}
 \left[
   \left(\frac{1-p}{1-\lambda}\right)^{1-p}
   \left(\frac{p}{\lambda}\right)^{p}
 \right]^{\dim E}
 \left( 
 \frac{\det \Gamma|_E}{(\det \Sigma_1|_E)^{1-p}(\det \Sigma_2|_E)^p}
 \right)^{1/2}.
\end{equation}
\end{theorem}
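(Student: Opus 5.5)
Our plan is to derive \eqref{e:FullHGBM} from the Gaussian Forward--Reverse Brascamp--Lieb inequality of our previous work, which compares a forward Brascamp--Lieb functional on products of Gaussian-type functions with a reverse (Barthe-type) functional, and asserts that the optimal constant is attained by centered Gaussians. The functional form we aim at is
\[
\Big(\int f_1\Big)^{1-p}\Big(\int f_2\Big)^{p} \le C \int h ,
\]
valid whenever $h((1-\lambda)x+\lambda y)\ge f_1(x)^{\alpha}f_2(y)^{\beta}$-type constraints hold with suitable exponents. Here $f_1 = \1_K \, d\gamma_{\Sigma_1}/dx$, $f_2 = \1_L\, d\gamma_{\Sigma_2}/dx$ and $h=\1_{(1-\lambda)K+\lambda L}\, d\gamma_\Gamma/dx$. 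Concretely, we work on $\R^n\times\R^n$ with the maps $B_1(x,y)=x$, $B_2(x,y)=y$ (forward data with exponents $1-p,p$) and the single reverse map $(x,y)\mapsto (1-\lambda)x+\lambda y$. We expect that one also needs an auxiliary map on the kernel direction $(x,y)\mapsto x-y$, or a Lebesgue factor, to make the data non-degenerate. The inequality is applied to log-concave functions $f_i$ that are Gaussians restricted to convex sets; centering \eqref{e:Centered} is exactly the hypothesis under which the Forward--Reverse inequality (proved via a Gaussian saturation / heat-flow argument that preserves centered log-concavity) is available.

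The key steps are as follows. First, we rewrite both sides as integrals over $\R^{2n}$ of $\1_K(x)\1_L(y)$ against Gaussian weights, and check that the Minkowski combination condition follows from $\1_{(1-\lambda)K+\lambda L}((1-\lambda)x+\lambda y)\ge \1_K(x)\1_L(y)$. Second, we invoke the Forward--Reverse Brascamp--Lieb theorem, assuming it is stated for centered log-concave inputs, to reduce the constant to a supremum over centered Gaussian inputs. For the indicators this means replacing $\1_K\gamma_{\Sigma_1}$ by Gaussians $\gamma_{A}$ with $A\le\Sigma_1$ and so on, including degenerate limits. Third, we compute this Gaussian constant explicitly.

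The main obstacle is this Gaussian optimization. For $f_1=e^{-\langle A_1^{-1}x,x\rangle/2}$ and $f_2=e^{-\langle A_2^{-1}y,y\rangle/2}$, the best $h$ is a Gaussian with covariance bounded by the inf-convolution, essentially $(1-\lambda)^2A_1+\lambda^2 A_2$ after optimizing the coupling. The ratio of determinants then must be maximized subject to $A_1\le \Sigma_1$, $A_2\le\Sigma_2$ and the $\Gamma$-weight. We would first simultaneously diagonalize in the commuting case to obtain the scalar factor $\left(\frac{1-p}{1-\lambda}\right)^{1-p}\left(\frac{p}{\lambda}\right)^p$ per coordinate. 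This comes from maximizing $a^{1-p}b^p/((1-\lambda)a+\lambda b)$-type expressions, where each direction contributes either this factor, if the constraint is active, or the determinant ratio. The maximum over which directions saturate is what produces $\max_E$ in \eqref{e:FullHGBMCoeff}. In the general non-commuting case, we expect to argue via a block decomposition: at an optimizer, the directions where the Gaussian constraint is inactive span a subspace $E$, and on $E^\perp$ the contribution is $\le 1$. Making this rigorous is where we anticipate the most work, likely through a concavity argument for $\log\det$ and a Lagrange multiplier analysis of the optimizer.
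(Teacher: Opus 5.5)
Your overall reduction is the paper's: apply the Gaussian Forward--Reverse Brascamp--Lieb inequality (Theorem~\ref{t:FRBLMNT}) to $\mathbf 1_K,\mathbf 1_L,\mathbf 1_{(1-\lambda)K+\lambda L}$ and bound the resulting Gaussian constant. But the proposal stops at the step that carries all the content, and the route you sketch for it would not work. Writing $A_i$ for the precision matrices of the Gaussian test functions $g_{A_i}$, $\bsig_1=(\frac{1-\lambda}{1-p})^2\Sigma_1$ and $\bsig_2=(\frac{\lambda}{p})^2\Sigma_2$, what must be shown with nothing commuting is
\[
\sup_{A_1,A_2\in\mathcal P_n}\frac{\det\bigl(\Id_n+\Gamma((1-p)A_1^{-1}+pA_2^{-1})^{-1}\bigr)}{\det(\Id_n+\bsig_1A_1)^{1-p}\det(\Id_n+\bsig_2A_2)^{p}}\le\max_{E\subseteq\R^n}\frac{\det\Gamma|_E}{(\det\bsig_1|_E)^{1-p}(\det\bsig_2|_E)^{p}} .
\]
Your sketch breaks down at this inequality for three reasons.
\begin{enumerate}
\item The supremum is in general not attained in $\mathcal P_n$. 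It is approached along degenerate limits such as $A=RP_E+R^{-1}P_{E^\perp}$, $R\to\infty$, which is exactly how $\max_E$ arises (cf.\ Remark~\ref{rem:balls}). So there is no interior optimizer on which to run a Lagrange-multiplier analysis.
\item The objective is a difference of concave functions of the $A_i$, so concavity of $\log\det$ gives nothing by itself.
\item For non-commuting matrices the determinants do not split along $E\oplus E^\perp$ into compressions $\Sigma|_E$, $\Sigma|_{E^\perp}$; Schur complements appear instead. So ``the contribution on $E^\perp$ is $\le1$'' has no clear meaning.
\end{enumerate}
You also never address how to pass from two independent Gaussian inputs $A_1,A_2$ to a single one, which is a genuine step when they do not commute.

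There are also two smaller slips. No auxiliary kernel map or Lebesgue factor is needed: take $I=2$, $J=1$, $(c_1,c_2,d_1)=(1-p,p,1)$, $\mathcal Q=0$ and $L_1=\bigl((1-\lambda)\Id_n\ \ \lambda\Id_n\bigr)$, with reference measures $\gamma_{\Sigma_1},\gamma_{\Sigma_2},\gamma_\Gamma$. And in your covariance notation the sup-convolution has covariance $\frac{(1-\lambda)^2}{1-p}A_1+\frac{\lambda^2}{p}A_2$, not $(1-\lambda)^2A_1+\lambda^2A_2$. These weights are exactly what produce the rescaled $\bsig_i$, and hence the factor $[(\frac{1-p}{1-\lambda})^{1-p}(\frac{p}{\lambda})^p]^{\dim E}$.

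The missing idea is the principal-minor expansion combined with H\"older's inequality, which handles non-commutativity without any optimizer.
\begin{enumerate}
\item For a single $A$ with orthonormal eigenbasis $(u_j)$ and eigenvalues $(a_j)$, one has $\det(\Id_n+\Gamma A)=\sum_{J\subseteq[n]}\prod_{j\in J}a_j\,\det\Gamma|_{U_J}$, where $U_J=\Span\{u_j:j\in J\}$. Bound each minor by the $\max_E$ ratio times $(\det\bsig_1|_{U_J})^{1-p}(\det\bsig_2|_{U_J})^p$. H\"older then recombines the sum into $\det(\Id_n+\bsig_1A)^{1-p}\det(\Id_n+\bsig_2A)^p$.
\item The reduction to $A_1=A_2$ (Lemma~\ref{l:Diagonalize}) uses the same mechanism. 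Diagonalize simultaneously by congruence, $A_1=TT^*$ and $A_2=TDT^*$, which turns $\Gamma,\bsig_i$ into the arbitrary matrices $T^*\Gamma T$, $T^*\bsig_iT$. Expand $\det\bigl(\Id_n+T^*\Gamma T\,((1-p)\Id_n+pD^{-1})^{-1}\bigr)$ in principal minors. Bound each minor ratio by the single-matrix supremum, which is obtained from the degenerate limits above. Finally use $(1-p+p/\kappa_j)^{-1}\le\kappa_j^p$ and apply H\"older again.
\end{enumerate}
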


\begin{remark} \label{rem:balls}
The constant $C(p,\lambda,\boldsymbol{\Sigma},\Gamma)$ is easily seen to be best-possible for every choice of $p,\lambda\in (0,1)$ and $\Sigma_1,\Sigma_2,\Gamma \in \mathcal{P}_n$. Indeed, given a linear subspace $E \subseteq \R^n$, we may assume that $E \neq \{0\}$ since using $K=L=\R^n$ shows that the optimal constant must be $\geq 1$. Now select $K = \frac{1-p}{1-\lambda} B_E(\eps) \times E^{\perp}$ and $L = \frac{p}{\lambda} B_E(\eps) \times E^{\perp}$ where $B_E(\eps)$ denotes the centered Euclidean ball of radius $\eps > 0$ in $E$, so that $(1-\lambda) K + \lambda L = B_E(\eps) \times E^{\perp}$. Note that $\gamma_{\Sigma}(\Omega \times E^{\perp}) = \gamma_{\Sigma|_E}(\Omega)$ for any measurable $\Omega \subset E$ and $\Sigma \in \mathcal{P}_n$, and that $\gamma_{\Sigma|_E}(\eps \Omega) = \eps^{\dim E} (|\Omega| / \sqrt{\det(2 \pi \Sigma|_E)} + o(\eps))$. Taking the limit as $\eps \downarrow 0$ and maximizing over all subspaces $E$, the optimality of $C(p,\lambda,\boldsymbol{\Sigma},\Gamma)$ easily follows. 
\end{remark}

There is a particular choice of $\Gamma$ which makes the constant $C(p,\lambda,\boldsymbol{\Sigma},\Gamma)$ especially attractive. 
Given $A, B \in \mathcal{P}_n$ and $p\in [0,1]$, the $p$-geometric mean of $A$ and $B$ is defined by 
$$
A \sharp_p B := A^\frac12 ( A^{-\frac12} B A^{-\frac12})^p A^\frac12. 
$$
Note that $A \sharp_0 B = A$, $A \sharp_1 B = B$ and $A \sharp_p A = A$. It is well-known that there exists a natural Riemannian metric on $\mathcal{P}_n$ (given by $g_A(T_1,T_2) = \tr(A^{-1} T_1 A^{-1} T_2)$), invariant under linear changes of coordinates, such that $[0,1] \ni p \mapsto A \sharp_p B$ is a constant speed geodesic from $A$ to $B$ in this metric, see e.g.~\cite[Chapter 6]{BhatiaBook}. Clearly $\det(A \sharp_p B) = \det(A)^{1-p} \det(B)^p$ and $A^{-1} \sharp_p B^{-1} = (A \sharp_p B)^{-1}$, but it is not true in general that e.g.~$A^2 \sharp_p B^2 = (A \sharp_p B)^2$ nor that $(A \sharp_p B)|_E = (A|_E) \sharp_p (B|_E)$.

\begin{theorem}[Geometric-Mean Heterovariant Gaussian Brunn--Minkowski Inequality] \label{t:HGBM} \label{t:BMCov}
    Let $\Sigma_1, \Sigma_2, \Gamma \in \mathcal{P}_n$ and $p,\lambda \in (0,1)$. 
    Suppose that 
    \begin{equation}\label{e:BMCovCond}
    \Gamma \le \Sigma_1 \sharp_p \Sigma_2.  
    \end{equation}
    Then for any convex sets $K, L \subset \R^n$ satisfying 
    the centering assumption \eqref{e:Centered}, it holds that 
    \begin{equation}\label{e:MultiGBM}
    \gamma_{\Sigma_1}(K)^{1-p} \gamma_{\Sigma_2}(L)^p
    \le
    \left[  \left( \frac{1-p}{1-\lambda} \right)^{1-p}  \left( \frac{p}{\lambda} \right)^p \right]^{n}
    \gamma_{\Gamma}((1-\lambda) K + \lambda L). 
    \end{equation}
\end{theorem}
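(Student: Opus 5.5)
The plan is to deduce Theorem~\ref{t:HGBM} from Theorem~\ref{t:Unify}. This reduces the claim to showing that, under \eqref{e:BMCovCond}, the constant $C(p,\lambda,\boldsymbol{\Sigma},\Gamma)$ in \eqref{e:FullHGBMCoeff} is at most $c^n$, where
\[
c := \left(\frac{1-p}{1-\lambda}\right)^{1-p}\left(\frac{p}{\lambda}\right)^{p}.
\]
First note that $c\ge 1$. By the weighted AM--GM inequality,
\[
c^{-1} = \left(\frac{1-\lambda}{1-p}\right)^{1-p}\left(\frac{\lambda}{p}\right)^{p} \le (1-p)\frac{1-\lambda}{1-p} + p\,\frac{\lambda}{p} = 1 .
\]
Hence $c^{\dim E} \le c^n$ for every subspace $E$. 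It therefore suffices to prove that for every subspace $E\subseteq \R^n$
\[
\det \Gamma|_E \le (\det \Sigma_1|_E)^{1-p} (\det \Sigma_2|_E)^{p}.
\]

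For this determinant inequality I would argue in two steps. Write $M := \Sigma_1\sharp_p\Sigma_2$.

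\emph{Step 1.} Since $\Gamma \le M$, compression gives $\Gamma|_E = P_E\Gamma P_E^* \le P_E M P_E^* = M|_E$. Monotonicity of the determinant on positive definite matrices then yields $\det\Gamma|_E \le \det M|_E$.

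\emph{Step 2.} It remains to show $\det (\Sigma_1\sharp_p\Sigma_2)|_E \le (\det\Sigma_1|_E)^{1-p}(\det\Sigma_2|_E)^p$. As the paper notes, compression does not commute with $\sharp_p$. However, the Ando-type compression inequality
\[
P_E (A\sharp_p B) P_E^* \le (P_E A P_E^*)\sharp_p (P_E B P_E^*)
\]
holds. This is a standard property of operator means (Kubo--Ando), valid for positive linear maps: $\Phi(A\sharp_p B)\le \Phi(A)\sharp_p\Phi(B)$. Taking determinants and using $\det(X\sharp_p Y)=\det X^{1-p}\det Y^p$ on $E$ gives the claim.

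If I prefer not to cite Ando's inequality, there is an alternative for the determinant version. Use $\det(X|_E) = \det X \cdot \det (X^{-1}|_{E^\perp})$, or more directly the variational characterization of the Gaussian integral: $(\det X|_E)^{1/2}$ is proportional to $\int_E e^{-\frac12\langle (X|_E)^{-1}y,y\rangle}dy$, and $(X|_E)^{-1}$ is the Schur complement $\min_{z\in E^\perp}\langle X^{-1}(y+z),y+z\rangle$. Since $M^{-1} = \Sigma_1^{-1}\sharp_p\Sigma_2^{-1}$, the two-variable joint concavity / geometric-mean inequality $\langle (A\sharp_p B) v,v\rangle \le \langle Av,v\rangle^{1-p}\langle Bv,v\rangle^{p}$ can be combined with H\"older's inequality on the Gaussian integrals to reach the same bound.

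I expect Step 2 to be the main obstacle. The cleanest route is to invoke Ando's inequality for the positive map $X\mapsto P_EXP_E^*$, and I would check its applicability carefully. With that in hand, the theorem follows by plugging the bound $C\le c^n$ into \eqref{e:FullHGBM}.
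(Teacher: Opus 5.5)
Your main argument is correct and is essentially the paper's proof: reduce to Theorem~\ref{t:Unify}, use AM--GM to get $c\ge 1$, and obtain $\det\Gamma|_E\le(\det\Sigma_1|_E)^{1-p}(\det\Sigma_2|_E)^p$ from Ando's transformer inequality $\Gamma|_E\le\Sigma_1|_E\sharp_p\Sigma_2|_E$; your Steps 1--2 are exactly this, split into compression monotonicity plus Ando for the positive map $X\mapsto P_EXP_E^*$. Your fallback sketch does not work as written: applying $\langle (A\sharp_pB)v,v\rangle\le\langle Av,v\rangle^{1-p}\langle Bv,v\rangle^p$ to $M^{-1}=\Sigma_1^{-1}\sharp_p\Sigma_2^{-1}$ bounds the Schur complement $(M|_E)^{-1}$ from above, which bounds $\det M|_E$ from below, the wrong direction; an elementary substitute that does work is to apply the same inequality to $M$ itself for $v\in E$, then simultaneously diagonalize $\Sigma_1|_E,\Sigma_2|_E$ and use Hadamard's inequality on the diagonal entries.
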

\begin{proof}
$\Gamma \le \Sigma_1 \sharp_p \Sigma_2$ implies $\Gamma|_E \le \Sigma_1|_E \sharp_p \Sigma_2|_E$ 
by Ando's transformer inequality (see e.g.~\cite{KuboAndo}) and hence 
$\det \Gamma|_E \le (\det \Sigma_1|_E)^{1-p}(\det \Sigma_2|_E)^p$
for all linear subspaces $E \subseteq \R^n$. In addition, $\left(\frac{1-p}{1-\lambda}\right)^{1-p}
   \left(\frac{p}{\lambda}\right)^{p} \geq 1$ by AM-GM,    and therefore $C(p,\lambda,\boldsymbol{\Sigma},\Gamma) \leq 
 \left[
   \left(\frac{1-p}{1-\lambda}\right)^{1-p}
   \left(\frac{p}{\lambda}\right)^{p}
 \right]^{n}$. 
\end{proof}

As in Remark \ref{rem:balls}, the constant on the right-hand side is best-possible. 

Let us now verify that Theorem \ref{t:HGBM} contains both the dimensional Gaussian Brunn--Minkowski inequality \eqref{e:GZ2} and the strong (B)-theorem \eqref{e:StrongB}, fusing them into a single unified formulation,  and revealing several striking new inequalities. 

\subsection{The dimensional Gaussian Brunn--Minkowski inequality}

We first consider the case when $\Sigma_1 =\Sigma_2 = \Gamma$ all coincide with a common $\Sigma \in \mathcal{P}_n$. Let $K,L$ satisfy $\int_K x \, d\gamma_{\Sigma}(x) = \int_L x \, d\gamma_{\Sigma}(x) = 0$. In that case, letting $p = \lambda$, we immediately recover the dimension-free Gaussian Brunn--Minkowski inequality (\ref{e:PL}) (with general covariance $\Sigma$). 

But what happens when we let $p$ vary? In that case, we observe that \eqref{e:MultiGBM} for all $p \in (0,1)$ is actually equivalent to the dimensional Gaussian Brunn--Minkowski \eqref{e:GZ2}. Indeed, \eqref{e:GZ2} and the AM-GM inequality imply:
\begin{align*}
\gamma_{\Sigma}((1-\lambda) K + \lambda L)^{1/n}
&\ge
    (1-\lambda) \gamma_{\Sigma}(K)^{1/n} + \lambda \gamma_{\Sigma}(L)^{1/n}
\\
&\ge
\left[\left( \frac{1-p}{1-\lambda} \right)^{1-p}  \left( \frac{p}{\lambda} \right)^p  \right]^{-1} \gamma_{\Sigma}(K)^{(1-p)/n} \gamma_{\Sigma}(L)^{p/n} ,
\end{align*} 
yielding \eqref{e:MultiGBM} with $\Sigma_1 = \Sigma_2 = \Gamma = \Sigma$. 
Conversely, starting with \eqref{e:MultiGBM} and using that $\Sigma \sharp_p \Sigma = \Sigma$ for every $p \in (0,1)$, we obtain:
\[
\left[\left( \frac{1-p}{1-\lambda} \right)^{1-p}  \left( \frac{p}{\lambda} \right)^p  \right]^{-n}\gamma_{\Sigma}(K)^{1-p} \gamma_{\Sigma}(L)^p
\le \gamma_\Sigma((1-\lambda)K+\lambda L) \quad \forall p \in (0,1) . 
\]
Maximizing the left-hand side over $p \in (0,1)$, it is straightforward to check that the dimensional Gaussian Brunn--Minkowski inequality \eqref{e:GZ2} is obtained, fully recovering the Eskenazis--Moschidis theorem (as well as the Han--Zhu extension when the corresponding barycenters are at the origin):

\begin{corollary} \label{cor:dimensionalGBM}
The dimensional Gaussian Brunn--Minkowski inequality \eqref{e:GZ2} holds for all convex $K,L \subset \R^n$ satisfying $\int_K x \, d\gamma_{\Sigma}(x) = \int_L x \, d\gamma_{\Sigma}(x) = 0$. 
\end{corollary}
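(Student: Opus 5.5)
The plan is to derive the corollary directly from Theorem \ref{t:HGBM} with $\Sigma_1=\Sigma_2=\Gamma=\Sigma$. Since $\Sigma\sharp_p\Sigma=\Sigma$, condition \eqref{e:BMCovCond} holds for every $p\in(0,1)$, and the centering assumption \eqref{e:Centered} is exactly the hypothesis of the corollary. Write $a=\gamma_\Sigma(K)^{1/n}$, $b=\gamma_\Sigma(L)^{1/n}$ and $c=\gamma_\Sigma((1-\lambda)K+\lambda L)^{1/n}$. Taking $n$-th roots in \eqref{e:MultiGBM} then gives, for all $p\in(0,1)$,
\[
\left(\frac{(1-\lambda)a}{1-p}\right)^{1-p}\left(\frac{\lambda b}{p}\right)^{p}\le c .
\]

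The degenerate cases are handled first. If $a=0$ or $b=0$, the claim reduces to $\lambda b\le c$ (respectively $(1-\lambda)a\le c$). This follows by letting $p\to1$ (respectively $p\to0$) in the display, because $x^x\to1$ as $x\to0$. Alternatively, if say $a=0$, we may assume $K$ is nonempty, since otherwise the Minkowski combination is empty. Note $0\in K$, as the barycenter of $K$ lies in $K$ when $K$ is convex and $\gamma_\Sigma(K)>0$. (If $\gamma_\Sigma(K)=0$, the limit argument suffices.)

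Now assume $a,b>0$ and choose
\[
p=\frac{\lambda b}{(1-\lambda)a+\lambda b}\in(0,1).
\]
Then $\frac{(1-\lambda)a}{1-p}=\frac{\lambda b}{p}=(1-\lambda)a+\lambda b$, so the left-hand side of the display equals exactly $(1-\lambda)a+\lambda b$. Hence $(1-\lambda)a+\lambda b\le c$, which is \eqref{e:GZ2}.

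Equivalently, this choice of $p$ is the equality case of the weighted AM--GM inequality $\sum q_i x_i\ge\prod x_i^{q_i}$ with weights $q=(1-p,p)$ and $x=\bigl(\frac{(1-\lambda)a}{1-p},\frac{\lambda b}{p}\bigr)$. There is no real obstacle here. The only points needing care are the boundary cases where $a$ or $b$ vanishes, which the limit $p\to0,1$ handles.
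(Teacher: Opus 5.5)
Your case $a,b>0$ is correct and is exactly the paper's argument. You specialize Theorem~\ref{t:HGBM} to $\Sigma_1=\Sigma_2=\Gamma=\Sigma$, use $\Sigma\sharp_p\Sigma=\Sigma$, and maximize over $p$. Your explicit choice $p=\lambda b/((1-\lambda)a+\lambda b)$ is just the maximizer that the paper leaves as ``straightforward to check''.

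Your paragraph on the degenerate cases, however, is wrong. If $a=0$, then for every $p\in(0,1)$ the left-hand side of your display contains the factor $a^{1-p}=0^{1-p}=0$. So the display is the trivial bound $0\le c$ for all $p$, and letting $p\to 1$ still gives only $0\le c$. The limit $x^x\to 1$ controls the factor $(1-p)^{-(1-p)}$, but not $0^{1-p}$, which jumps at $p=1$. Your alternative argument is circular: $a=0$ means $\gamma_\Sigma(K)=0$, which is exactly where you defer back to the limit argument. The remark about $K=\emptyset$ shows a failure rather than a trivial case, since then the right-hand side is $0$ while the left-hand side is $\lambda b$.

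In fact the degenerate statement is false. When $\gamma_\Sigma(K)=0$ the centering hypothesis is vacuous. Taking $K=\{x_0\}$ with $|x_0|$ large and $L$ a centered ball gives $\lambda\gamma_\Sigma(L)^{1/n}>\gamma_\Sigma((1-\lambda)x_0+\lambda L)^{1/n}$.

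So the corollary must be read for $\gamma_\Sigma(K),\gamma_\Sigma(L)>0$, as the paper implicitly does when it maximizes over $p$. You should replace your degenerate-case paragraph with this restriction. If you want to include null sets, you need an extra hypothesis such as $0\in K$. Then $(1-\lambda)K+\lambda L\supseteq\lambda L$, and $\gamma_\Sigma(\lambda L)\ge\lambda^n\gamma_\Sigma(L)$ (because the centered Gaussian density is radially non-increasing) gives $\lambda b\le c$.
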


\begin{remark}
A similar argument was used by Livshyts--Marsiglietti--Nayar--Zvavitch \cite{LMNZ}, who showed that for any ideals $A, B \subset \R^n$ and any unconditional product measure $\nu$ on $\R^n$ with a non-increasing density, one has  
$$
    \nu(A)^{1-p} \nu(B)^p
    \le
    \left[  \left( \frac{1-p}{1-\lambda} \right)^{1-p} \left( \frac{p}{\lambda} \right)^p \right]^{n}
    \nu((1-\lambda) A + \lambda B) ,
$$
thereby deducing a dimensional Brunn--Minkowski inequality for $\nu$. 
\end{remark}

Using the more general Theorem \ref{t:Unify}, we can also obtain the following heterovariant version of the dimensional Gaussian Brunn--Minkowski inequality, whose proof is deferred to Section \ref{sec:3}. 
For $a,b\ge0$, $\lambda\in (0,1)$ and $-\infty \le p <\infty$, we denote 
$$
M_p^\lambda (a,b)
:=
\begin{cases}
  ((1-\lambda) a^p + \lambda b^p)^{1/p}, &  p\in \R, \\
  \min(a, b), & p=-\infty. 
\end{cases}
$$
\begin{proposition}\label{p:HeteroGBMChar}
    Let $\Sigma_1, \Sigma_2, \Gamma \in \mathcal{P}_n$ and $\lambda \in (0,1)$. Then the following statements are equivalent:
    \begin{enumerate}[label=(\roman*)]
        \item \label{it:char-i} For any convex $K,L\subset \R^n$ satisfying the centering assumption \eqref{e:Centered}, it holds that 
        $$
        (1-\lambda) \gamma_{\Sigma_1}(K)^{1/n} + \lambda \gamma_{\Sigma_2}(L)^{1/n}
        \le
        \gamma_{\Gamma}((1-\lambda) K + \lambda L)^{1/n}. 
        $$
         \item  \label{it:char-ii} For any linear subspace $E \subseteq \R^n$ with ${\rm dim}\, E=d \geq 1$, it holds that 
        \[         {\rm det}\, \Gamma|_E \le M_{-\frac1{2(n-d)}}^\lambda ({\rm det}\, \Sigma_1|_E, {\rm det}\, \Sigma_2|_E). 
        \]         Of course, when $d=n$, $-\frac{1}{2(n-d)}$ is interpreted as $-\infty$. 
    \end{enumerate}
\end{proposition}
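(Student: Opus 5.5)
The plan is to prove \ref{it:char-ii}$\Rightarrow$\ref{it:char-i} by applying Theorem \ref{t:Unify} with a well-chosen $p$, and \ref{it:char-i}$\Rightarrow$\ref{it:char-ii} by testing on the thin cylinders of Remark \ref{rem:balls}. Both directions rest on one identity. For $\alpha,\beta>0$ and $\lambda\in(0,1)$, set
\[
A_p:=\left(\frac{1-p}{1-\lambda}\right)^{1-p}\left(\frac{p}{\lambda}\right)^p \geq 1 .
\]
Weighted AM-GM gives $\sup_{p\in(0,1)} \alpha^{1-p}\beta^p/A_p=(1-\lambda)\alpha+\lambda\beta$, with equality at $p=\lambda\beta/((1-\lambda)\alpha+\lambda\beta)$.

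\textbf{Step 1: rewriting (ii).} Fix a subspace $E$ with $\dim E=d<n$ and put $k=2(n-d)$. Write $\det\Sigma_1|_E=\alpha^{-k}$ and $\det\Sigma_2|_E=\beta^{-k}$. Then
\[
\inf_p A_p^{k}(\det\Sigma_1|_E)^{1-p}(\det\Sigma_2|_E)^p=\Big(\sup_p \alpha^{1-p}\beta^p/A_p\Big)^{-k}=M^\lambda_{-1/k}(\det\Sigma_1|_E,\det\Sigma_2|_E).
\]
Hence \ref{it:char-ii} on $E$ is equivalent to
\[
A_p^{d}\left(\frac{\det\Gamma|_E}{(\det\Sigma_1|_E)^{1-p}(\det\Sigma_2|_E)^p}\right)^{1/2}\le A_p^{n}\quad\text{for all } p\in(0,1).
\]
For $d=n$ the analogous statement is $\det\Gamma\le(\det\Sigma_1)^{1-p}(\det\Sigma_2)^p$ for all $p$, which is exactly $\det\Gamma\le\min(\det\Sigma_1,\det\Sigma_2)$. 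For $E=\{0\}$ the bound $1\le A_p^n$ is automatic. So \ref{it:char-ii} says precisely that $C(p,\lambda,\boldsymbol\Sigma,\Gamma)\le A_p^n$ for every $p\in(0,1)$.

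\textbf{Step 2: (ii)$\Rightarrow$(i).} Let $x=\gamma_{\Sigma_1}(K)^{1/n}$ and $y=\gamma_{\Sigma_2}(L)^{1/n}$. If $x=0$ or $y=0$, the inequality follows from monotonicity: for instance, $L$ has nonempty interior if $y>0$, and then one compares with $\lambda L$ (which sits inside $(1-\lambda)K+\lambda L$ after a translation that the centering allows) plus a limiting argument. I expect this degenerate case to need some care but to be secondary. Otherwise choose $p=\lambda y/((1-\lambda)x+\lambda y)\in(0,1)$. By the AM-GM identity, $(1-\lambda)x+\lambda y=x^{1-p}y^p/A_p$. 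Theorem \ref{t:Unify} together with Step 1 then gives $x^{n(1-p)}y^{np}\le A_p^n\gamma_\Gamma((1-\lambda)K+\lambda L)$. Taking $n$-th roots yields \ref{it:char-i}.

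\textbf{Step 3: (i)$\Rightarrow$(ii).} Fix $E$ with $\dim E=d$, and take $a,b>0$ with $(1-\lambda)a+\lambda b=1$. Set $K=aB_E(\eps)\times E^\perp$ and $L=bB_E(\eps)\times E^\perp$; these are symmetric, hence centered, and $(1-\lambda)K+\lambda L=B_E(\eps)\times E^\perp$. Using the small-$\eps$ asymptotics from Remark \ref{rem:balls}, divide by $\eps^{d/n}$ and let $\eps\to0$. Writing $s_i=(\det\Sigma_i|_E)^{-1/(2n)}$, $s_\Gamma=(\det\Gamma|_E)^{-1/(2n)}$ and $q=d/n$, this gives
\[
(1-\lambda)s_1a^{q}+\lambda s_2b^{q}\le s_\Gamma .
\]
Maximizing the left side over the constraint by Hölder, for $q<1$ the supremum is $M^\lambda_{1/(1-q)}(s_1,s_2)$. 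Unwinding the powers, the inequality becomes $\det\Gamma|_E\le M^\lambda_{-1/(2(n-d))}(\det\Sigma_1|_E,\det\Sigma_2|_E)$. For $q=1$ the supremum, approached as $a\to 1/(1-\lambda)$ or $b\to1/\lambda$, is $\max(s_1,s_2)$, which gives the $\min$ condition.

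The main points to check are the exact exponent bookkeeping in the Hölder optimization of Step 3 and the degenerate-measure case in Step 2. The core mechanism is the duality identity of Step 1.
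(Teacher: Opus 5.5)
Your proposal is correct and follows the paper's proof. For (ii)$\Rightarrow$(i) you use the AM--GM bound $C(p,\lambda,\boldsymbol{\Sigma},\Gamma)\le A_p^n$ in Theorem \ref{t:Unify} at the optimal $p$; for (i)$\Rightarrow$(ii) you use the same thin cylinders (your $a,b$ are exactly the paper's $\frac{1-p}{1-\lambda},\frac{p}{\lambda}$), and your H\"older maximization simply recovers the paper's explicit choice of $p$.

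One caution: discard your sketch for the degenerate case. A convex set of measure zero, such as $K=\{a\}$, satisfies \eqref{e:Centered} vacuously and violates (i) once $|a|$ is large, so the translation argument cannot work. The statement, like the paper's proof, tacitly requires $\gamma_{\Sigma_1}(K),\gamma_{\Sigma_2}(L)>0$.
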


\subsection{The (B)-theorem and its extensions}

For $s=(s_1, \dots, s_n) \in \R^n$, let $e^s := (e^{s_1}, \dots, e^{s_n}) \in \R^n$ and $\Delta(s) := {\rm diag}(s_1, \dots, s_n) \in \R^{n \times n}$. 

\subsubsection{The strong (B)-theorem}

The (B)-theorem, originally conjectured by Banaszczyk, was popularized by Lata\l a \cite{Lat} and finally established, in a strengthened form, by Cordero-Erausquin--Fradelizi--Maurey \cite{C-EFM}. Recall that the strong (B)-theorem asserts that $\R^n \ni s \mapsto \gamma(\Delta(e^s) K)$ is log-concave for every origin-symmetric convex set $K=-K$ in $\R^n$. In other words, we have for all $p \in [0,1]$,
\begin{equation}\label{e:FormStrongBThm}
\gamma(\Delta(e^s)K)^{1-p} \gamma(\Delta(e^t)K)^p
    \le
    \gamma( \Delta(e^{(1-p)s + p t}) K). 
\end{equation}
In particular, specializing $\Delta(e^s)$ and $\Delta(e^t)$ to multiples of the identity, the (B)-theorem asserts that for all $a,b > 0$,
\begin{equation} \label{e:B-Thm}
\gamma(aK)^{1-p} \gamma(bK)^p \le \gamma(a^{1-p}b^p K). 
\end{equation}

Now note that $\gamma_{\Sigma}(K) = \gamma(\Sigma^{-\frac{1}{2}} K)$ and $\int_K x \, d\gamma_{\Sigma}(x) = \Sigma^{\frac{1}{2}} \int_{\Sigma^{-\frac{1}{2}} K} x \, d\gamma(x)$, two facts we shall freely use below. Consequently, specializing Theorem \ref{t:HGBM} to the case when $K=L$ is origin-symmetric, $\lambda=p$, $\Sigma_1 = \Delta(e^{-2s})$, $\Sigma_2 = \Delta(e^{-2t})$ and $\Gamma = \Sigma_1 \sharp_p \Sigma_2 = \Delta(e^{-2((1-p) s + p t)})$, we immediately recover the strong (B)-theorem \eqref{e:FormStrongBThm}. 
In fact, we see that symmetry is not essential:
\begin{corollary}
Let $p \in [0,1]$ and $s,t\in \R^n$. Then the strong (B)-theorem \eqref{e:FormStrongBThm} holds for all convex $K \subset \R^n$ satisfying
\[
\int_{\Delta(e^s) K} x \, d\gamma(x) = \int_{\Delta(e^t) K} x \, d\gamma(x) = 0 . 
\]
\end{corollary}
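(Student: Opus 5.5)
The plan is to deduce the corollary directly from Theorem \ref{t:HGBM} with $K=L$, $\lambda=p$, and diagonal covariances, as sketched in the paragraph just before the statement. The endpoint cases $p\in\{0,1\}$ are trivial: both sides of \eqref{e:FormStrongBThm} coincide. So assume $p\in(0,1)$.

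Set $\Sigma_1=\Delta(e^{-2s})$, $\Sigma_2=\Delta(e^{-2t})$ and $\Gamma=\Delta(e^{-2((1-p)s+pt)})$. Since $\Sigma_1$ and $\Sigma_2$ are diagonal, they commute, so $\Sigma_1\sharp_p\Sigma_2=\Sigma_1^{1-p}\Sigma_2^{p}=\Gamma$. Hence condition \eqref{e:BMCovCond} holds with equality.

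Next I translate the centering hypothesis using the identities $\gamma_\Sigma(K)=\gamma(\Sigma^{-1/2}K)$ and $\int_K x\,d\gamma_\Sigma(x)=\Sigma^{1/2}\int_{\Sigma^{-1/2}K}x\,d\gamma(x)$. With $\Sigma_1^{-1/2}=\Delta(e^{s})$ we get
\[
\int_K x\,d\gamma_{\Sigma_1}(x)=\Delta(e^{-s})\int_{\Delta(e^s)K}x\,d\gamma(x)=0,
\]
and the same computation gives $\int_K x\,d\gamma_{\Sigma_2}(x)=0$. So $K=L$ satisfies \eqref{e:Centered}. This computation is the only real verification needed, and it is where the assumed barycenter conditions enter.

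Now apply \eqref{e:MultiGBM} with $\lambda=p$. The constant $\bigl[(\frac{1-p}{1-p})^{1-p}(\frac{p}{p})^p\bigr]^n$ equals $1$, and $(1-p)K+pK=K$ because $K$ is convex. This yields
\[
\gamma_{\Sigma_1}(K)^{1-p}\gamma_{\Sigma_2}(K)^p\le\gamma_\Gamma(K).
\]
Rewriting each factor via $\gamma_\Sigma(K)=\gamma(\Sigma^{-1/2}K)$ gives exactly \eqref{e:FormStrongBThm}, since $\Gamma^{-1/2}=\Delta(e^{(1-p)s+pt})$.

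There is no real obstacle here; the substance lies in Theorem \ref{t:HGBM}. The only points requiring care are the commutativity giving $\sharp_p$ of diagonal matrices, the sign conventions in the exponents, and the Minkowski identity $(1-p)K+pK=K$, which needs convexity.
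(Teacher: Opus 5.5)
Your proposal is correct and is exactly the paper's argument. Like the paper, you specialize Theorem \ref{t:HGBM} to $K=L$, $\lambda=p$, $\Sigma_1=\Delta(e^{-2s})$, $\Sigma_2=\Delta(e^{-2t})$ and $\Gamma=\Sigma_1\sharp_p\Sigma_2=\Delta(e^{-2((1-p)s+pt)})$, and translate the measures and the centering condition via $\gamma_\Sigma(K)=\gamma(\Sigma^{-1/2}K)$; your separate handling of the trivial endpoints $p\in\{0,1\}$, which Theorem \ref{t:HGBM} does not cover, is a small welcome addition.
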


\subsubsection{The heterovariant (B)-theorem}

Over the past few decades, extending the strong (B)-theorem to arbitrary even log-concave measures has attracted considerable attention because of its connection with the log-Brunn--Minkowski conjecture put forth by B\"{o}r\"{o}czky--Lutwak--Yang--Zhang \cite{BLYZ}. 
Indeed, it was shown by Saroglou \cite{SarMath} that the log-Brunn--Minkowski conjecture is equivalent to the conjecture that 
\[ \R^n \ni s \mapsto \gamma_{\Sigma}(\Delta(e^s) [-1, 1]^n)
\] is log-concave for any $\Sigma \in \mathcal{P}_n$ and $n \in \mathbb{N}$. 

Cordero-Erausquin--Rotem \cite{C-ER} observed that the log-concavity of $\gamma_{\Sigma}(\Delta(e^s) K)$ cannot hold for arbitrary origin-symmetric convex sets $K \subset \R^n$ and $\Sigma \in \mathcal{P}_n$. More precisely, they showed that for every $\eps > 0$, there exist an origin-symmetric convex set $K_\eps \subset \R^n$ and $\Sigma_{\eps} \in \mathcal{P}_n$ within $\eps$ distance to ${\rm Id}_n$ such that $\gamma_{\Sigma_\eps}(\Delta(e^s) K_\eps)$ is not log-concave in $s$. This shows that \eqref{e:FormStrongBThm} is not preserved under arbitrarily small perturbations of the covariance matrix. 

The following new corollary of Theorem \ref{t:HGBM} provides one possible extension of the strong (B)-theorem to general (possibly non-diagonal, non-commuting) non-singular matrices. 

\begin{corollary}[Heterovariant (B)-theorem] \label{c:HeteroBThm}
Let $Q,P$ denote non-singular $n$ by $n$ matrices, and let $p \in [0,1]$. 
Then for any convex set $K \subset \R^n$ satisfying
\[
\int_{Q K} x \, d\gamma(x) = \int_{P K} x \, d\gamma(x) = 0 ,
\]
it holds that
\[
\gamma(Q K)^{1-p} \gamma(P K)^p \leq \gamma(((Q^t Q) \sharp_p (P^t P))^{\frac{1}{2}} K) . 
\]
In particular, if $A,B \in \mathcal{P}_n$ and
\[
\int_{A^{\frac{1}{2}} K} x \, d\gamma(x) = \int_{B^{\frac{1}{2}} K} x \, d\gamma(x) = 0 ,
\]
then
\[
\gamma(A^{\frac{1}{2}} K)^{1-p} \gamma(B^{\frac{1}{2}} K)^p \leq \gamma((A \sharp_p B)^{\frac{1}{2}} K) . 
\]
\end{corollary}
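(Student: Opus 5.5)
We derive this from Theorem \ref{t:HGBM} with $K=L$ and $\lambda=p$, choosing the covariances so that $\gamma_{\Sigma_1}(K)=\gamma(QK)$ and $\gamma_{\Sigma_2}(K)=\gamma(PK)$. For a non-singular $M$, the change of variables $x=My$ gives
\[
\gamma(MK)=\int_K (2\pi)^{-n/2}|\det M|\, e^{-\frac12\langle M^tM y,y\rangle}\,dy=\gamma_{(M^tM)^{-1}}(K).
\]
So we set $\Sigma_1:=(Q^tQ)^{-1}$, $\Sigma_2:=(P^tP)^{-1}$ and $\Gamma:=\Sigma_1\sharp_p\Sigma_2$. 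Condition \eqref{e:BMCovCond} then holds with equality. Since $A^{-1}\sharp_pB^{-1}=(A\sharp_pB)^{-1}$, we have $\Gamma=((Q^tQ)\sharp_p(P^tP))^{-1}$. Writing $R:=((Q^tQ)\sharp_p(P^tP))^{1/2}$, which is symmetric positive definite with $R^tR=R^2=\Gamma^{-1}$, the same identity gives $\gamma_\Gamma(K)=\gamma(RK)$.

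Next we check the centering assumption \eqref{e:Centered}. The change of variables gives
\[
\int_K y\,d\gamma_{(M^tM)^{-1}}(y)=M^{-1}\int_{MK}x\,d\gamma(x).
\]
Hence $\int_{QK}x\,d\gamma=0$ and $\int_{PK}x\,d\gamma=0$ are equivalent to $\int_K y\,d\gamma_{\Sigma_1}=0$ and $\int_K y\,d\gamma_{\Sigma_2}=0$ respectively.

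With $\lambda=p\in(0,1)$, the constant $\left(\frac{1-p}{1-\lambda}\right)^{1-p}\left(\frac p\lambda\right)^p$ equals $1$. Also $(1-p)K+pK=K$ by convexity. So \eqref{e:MultiGBM} reads $\gamma(QK)^{1-p}\gamma(PK)^p\le\gamma_\Gamma(K)=\gamma(RK)$, which is the claim.

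The endpoint cases are trivial: $p=0$ gives $R=(Q^tQ)^{1/2}$, and $p=1$ gives $R=(P^tP)^{1/2}$. In both, $R=UM$ for an orthogonal $U$ (with $M=Q$ or $P$), and $\gamma$ is rotation invariant, so both sides are equal. For the particular case, take $Q=A^{1/2}$ and $P=B^{1/2}$, so that $Q^tQ=A$ and $P^tP=B$.

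The only step needing care is the bookkeeping with inverses and transposes. One must use that $\sharp_p$ commutes with inversion, and that only $R^tR$ matters, which makes the choice of square root harmless. There is no genuine obstacle.
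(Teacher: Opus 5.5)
Your proof is correct and takes essentially the paper's route: apply Theorem \ref{t:HGBM} with $K=L$, $\lambda=p$, $\Sigma_1=(Q^tQ)^{-1}$, $\Sigma_2=(P^tP)^{-1}$ and $\Gamma=\Sigma_1\sharp_p\Sigma_2$, then translate measures and barycenters back to $\gamma$ by a change of variables. The only differences are cosmetic. You handle general $Q,P$ directly via $\gamma(MK)=\gamma_{(M^tM)^{-1}}(K)$, whereas the paper first proves the case $Q=A^{1/2}$, $P=B^{1/2}$ and then reduces to it by polar decomposition and rotation invariance. Your separate treatment of the endpoints $p\in\{0,1\}$, which Theorem \ref{t:HGBM} (stated for $p\in(0,1)$) does not cover, is a small extra care the paper leaves implicit.
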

\begin{proof}
Let us first establish the ``in particular" statement. Apply Theorem \ref{t:HGBM} with $K=L$, $\lambda=p$, $\Sigma_1 = A^{-1}$, $\Sigma_2 = B^{-1}$ and $\Gamma = \Sigma_1 \sharp_p \Sigma_2 = (A \sharp_p B)^{-1}$.  Recalling that $\gamma_{\Sigma}(K) = \gamma(\Sigma^{-\frac{1}{2}} K)$ and $\int_K x \, d\gamma_{\Sigma}(x) = \Sigma^{\frac{1}{2}} \int_{\Sigma^{-\frac{1}{2}} K} x \, d\gamma(x)$, the ``in particular" part follows.

For the general case, write $Q = U_Q A^{\frac{1}{2}}$ and  $P = U_P B^{\frac{1}{2}}$  for orthogonal $U_Q,U_P$ and $A,B \in \mathcal{P}_n$, so that $A = Q^t Q$ and $B = P^t P$. Since the Gaussian measure $\gamma$ is rotation invariant, we have $\gamma(Q K) = \gamma(A^{\frac{1}{2}} K)$ and $\gamma(P K) = \gamma(B^{\frac{1}{2}} K)$, reducing the general case to the particular one. 
\end{proof}

Using $Q = A^{1/2} = \Delta(e^s)$ and $P = B^{1/2} =\Delta(e^t)$, this recovers the strong (B)-theorem \eqref{e:FormStrongBThm}.

\subsection{The bilinear (B)-theorem}

Instead of using $K=L$ as in the previous subsection, a new bilinear form of the (B)-theorem and its extensions arises if we keep $K$ and $L$ separate (but still use $\lambda = p$). Repeating verbatim the argument used above, Theorem \ref{t:HGBM} implies:

\begin{corollary}[Bilinear Heterovariant (B)-theorem] \label{cor:BHB-Thm}
Let $Q,P$ denote two non-singular $n$ by $n$ matrices, and let $\lambda \in [0,1]$. Then for any convex sets $K, L \subset \R^n$ satisfying
    $$
    \int_{Q K} x \, d\gamma(x) = \int_{P L} x \, d\gamma(x) = 0, 
    $$
    it holds that 
    \begin{equation}\label{e:BiBThm}
    \gamma(Q K)^{1-\lambda} \gamma(P L)^\lambda
    \le
    \gamma \brac{((Q^t Q) \sharp_\lambda (P^t P))^{\frac{1}{2}}\brac{ (1-\lambda) K + \lambda L}}. 
    \end{equation}
\end{corollary}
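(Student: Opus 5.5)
The plan is to deduce Corollary \ref{cor:BHB-Thm} from Theorem \ref{t:HGBM} with $p=\lambda$, exactly as in the proof of Corollary \ref{c:HeteroBThm}, but keeping $K$ and $L$ distinct.

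First we handle the endpoint cases. When $\lambda=0$, we have $Q^tQ\sharp_0 P^tP = Q^tQ$, so the right-hand side equals $\gamma((Q^tQ)^{1/2}K)$. By the polar decomposition and rotation invariance of $\gamma$, this is $\gamma(QK)$, and the inequality is an equality. The case $\lambda=1$ is symmetric. So assume $\lambda\in(0,1)$.

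Next, reduce to positive definite matrices. Write $Q=U_QA^{1/2}$ and $P=U_PB^{1/2}$ with $U_Q,U_P$ orthogonal, $A=Q^tQ$ and $B=P^tP$. Rotation invariance gives $\gamma(QK)=\gamma(A^{1/2}K)$. Moreover,
\[
\int_{A^{1/2}K}x\,d\gamma(x)=U_Q^{-1}\int_{QK}x\,d\gamma(x)=0 .
\]
The same holds for $P$ and $L$. It therefore suffices to prove \eqref{e:BiBThm} with $Q,P$ replaced by $A^{1/2},B^{1/2}$.

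Now set $\Sigma_1=A^{-1}$, $\Sigma_2=B^{-1}$, $p=\lambda$, and $\Gamma=\Sigma_1\sharp_\lambda\Sigma_2=(A\sharp_\lambda B)^{-1}$. The last identity uses $A^{-1}\sharp_p B^{-1}=(A\sharp_pB)^{-1}$, noted earlier in the paper. We verify each ingredient of Theorem \ref{t:HGBM}:
\begin{itemize}
\item Condition \eqref{e:BMCovCond} holds with equality.
\item For the centering \eqref{e:Centered}, use $\int_K x\,d\gamma_{\Sigma}=\Sigma^{1/2}\int_{\Sigma^{-1/2}K}x\,d\gamma$. This gives $\int_K x\,d\gamma_{A^{-1}} = A^{-1/2}\int_{A^{1/2}K}x\,d\gamma = 0$, and similarly for $L$.
\item Since $p=\lambda$, the constant $\left[\left(\frac{1-p}{1-\lambda}\right)^{1-p}\left(\frac{p}{\lambda}\right)^p\right]^n$ equals $1$.
\end{itemize}
Theorem \ref{t:HGBM} then yields
\[
\gamma(A^{1/2}K)^{1-\lambda}\gamma(B^{1/2}L)^\lambda = \gamma_{\Sigma_1}(K)^{1-\lambda}\gamma_{\Sigma_2}(L)^\lambda \le \gamma_\Gamma((1-\lambda)K+\lambda L).
\]
Finally, $\gamma_\Gamma(\cdot)=\gamma(\Gamma^{-1/2}\,\cdot)=\gamma((A\sharp_\lambda B)^{1/2}\,\cdot)$, which is \eqref{e:BiBThm}.

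There is no genuine obstacle here. The only points needing care are the transfer of the barycenter condition under the linear changes of variables, and the identity $(A\sharp_\lambda B)^{-1}=A^{-1}\sharp_\lambda B^{-1}$ used to identify $\Gamma^{-1/2}$.
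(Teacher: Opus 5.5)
Your proposal is correct and follows the paper's own route: apply Theorem \ref{t:HGBM} with $p=\lambda$, $\Sigma_1=A^{-1}$, $\Sigma_2=B^{-1}$, $\Gamma=(A\sharp_\lambda B)^{-1}$, then use the polar decomposition and rotation invariance to pass from $A^{1/2},B^{1/2}$ to general $Q,P$, exactly as in the proof of Corollary \ref{c:HeteroBThm} but with $K\neq L$. Your separate treatment of the endpoints $\lambda\in\{0,1\}$ also correctly fills in a detail the paper leaves implicit, since Theorem \ref{t:HGBM} is stated only for $p,\lambda\in(0,1)$.
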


Taking $K=L$ recovers the heterovariant (B)-Theorem (Corollary \ref{c:HeteroBThm}). On the other hand, using $P = Q = \Id_n$ recovers the dimension-free Gaussian Brunn-Minkowski inequality (\ref{e:PL}). Even the simplest non-trivial case which mixes these two extremes together, namely when $Q = a \, \Id_n$ and $P = b \, \Id_n$, is already new and interesting:

\begin{corollary}[Bilinear (B)-theorem] \label{cor:BB-Thm}
For all $a,b > 0$ and convex sets $K, L \subset \R^n$ satisfying
\[
\int_{a K} x \, d\gamma(x) = \int_{b L} x \, d\gamma(x) = 0, 
\]
it holds for all $\lambda \in [0,1]$ that 
\[
\gamma(a K)^{1-\lambda} \gamma(b L)^\lambda \le  
    \gamma(a^{1-\lambda} b^\lambda ((1-\lambda) K + \lambda L)). 
\]
\end{corollary}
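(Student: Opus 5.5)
This is the special case $Q = a\,\Id_n$, $P = b\,\Id_n$ of Corollary \ref{cor:BHB-Thm}, so the plan is simply to specialize that corollary and identify the resulting matrix. Take $Q = a\,\Id_n$ and $P = b\,\Id_n$, which are non-singular since $a,b>0$. The centering hypotheses of Corollary \ref{cor:BHB-Thm} then read $\int_{aK} x\,d\gamma(x) = \int_{bL} x\,d\gamma(x) = 0$, which are exactly the assumptions of the present statement.

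The only computation is the $\lambda$-geometric mean of $Q^tQ = a^2\Id_n$ and $P^tP = b^2\Id_n$. For commuting scalar matrices the definition gives
\[
(a^2\Id_n)\sharp_\lambda (b^2\Id_n) = a\,\bigl(a^{-2}b^2\bigr)^{\lambda}\, a\,\Id_n = a^{2(1-\lambda)}b^{2\lambda}\,\Id_n .
\]
Its positive square root is $a^{1-\lambda}b^\lambda\,\Id_n$.

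Substituting this into \eqref{e:BiBThm} yields
\[
\gamma(aK)^{1-\lambda}\gamma(bL)^\lambda \le \gamma\bigl(a^{1-\lambda}b^\lambda((1-\lambda)K+\lambda L)\bigr),
\]
which is the claim.

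The endpoint cases $\lambda\in\{0,1\}$ are trivial identities. Strictly, they are included in Corollary \ref{cor:BHB-Thm}, but Theorem \ref{t:HGBM} was stated only for $\lambda\in(0,1)$, so I would verify them directly. For $\lambda=0$ both sides equal $\gamma(aK)$, and for $\lambda=1$ both sides equal $\gamma(bL)$.

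There is no real obstacle here. The only point to check is that the identification of $(Q^tQ)\sharp_\lambda(P^tP)$ for scalar matrices is consistent with the definition of $\sharp_\lambda$.
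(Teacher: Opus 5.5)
Your proposal is correct and follows the paper's route exactly: the paper obtains this corollary by specializing Corollary~\ref{cor:BHB-Thm} to $Q=a\,\Id_n$, $P=b\,\Id_n$, where $(a^2\Id_n)\sharp_\lambda(b^2\Id_n)=a^{2(1-\lambda)}b^{2\lambda}\Id_n$, as you computed. Your separate check of the endpoints $\lambda\in\{0,1\}$ is a harmless addition, since at those values the inequality is an identity.
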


Taking $K=L$, a common origin-symmetric convex body, recovers the usual (B)-Theorem \eqref{e:B-Thm}. The following equivalent formulation is perhaps more insightful:

\begin{corollary}[Bilinear (B)-theorem again - an improved dimension-free Gaussian Brunn--Minkowski Inequality] \label{cor:BBE-Thm}
For any convex sets $K, L \subset \R^n$ satisfying
\[
\int_{K} x \, d\gamma(x) = \int_{L} x \, d\gamma(x) = 0, 
\]
it holds for all $\lambda \in [0,1]$ that 
\begin{equation} \label{eq:bilinear-B-thm}
\gamma(K)^{1-\lambda} \gamma(L)^\lambda \le \inf_{r > 0} 
    \gamma((1-\lambda) r^{-\lambda} K + \lambda r^{1-\lambda} L). 
\end{equation}
\end{corollary}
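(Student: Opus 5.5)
The plan is to deduce Corollary \ref{cor:BBE-Thm} from the bilinear (B)-theorem, Corollary \ref{cor:BB-Thm}, by a rescaling of the sets, and then take the infimum over $r$. The two statements should in fact be equivalent; only the deduction of \eqref{eq:bilinear-B-thm} is needed here.

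Fix $r>0$ and $\lambda \in (0,1)$; the endpoint cases $\lambda\in\{0,1\}$ are trivial. Choose $a,b>0$ with $a^{1-\lambda} b^\lambda = 1$ and $b/a = r$, namely
\[
a = r^{-\lambda}, \qquad b = r^{1-\lambda}.
\]
Apply Corollary \ref{cor:BB-Thm} to the convex sets $K' := a^{-1}K$ and $L' := b^{-1}L$. The centering hypothesis there reads $\int_{aK'} x\,d\gamma(x) = \int_{bL'} x\,d\gamma(x) = 0$, which is exactly the assumed $\int_K x\,d\gamma(x) = \int_L x\,d\gamma(x)=0$. The conclusion is
\[
\gamma(K)^{1-\lambda}\gamma(L)^\lambda \le \gamma\brac{a^{1-\lambda}b^\lambda\brac{(1-\lambda)a^{-1}K + \lambda b^{-1}L}} = \gamma\brac{(1-\lambda) r^{\lambda} K + \lambda r^{-(1-\lambda)} L}.
\]

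This expression is the target with $r$ replaced by $r^{-1}$. So either apply the inequality with $r^{-1}$ in place of $r$, or equivalently choose $a = r^{\lambda}$ and $b = r^{-(1-\lambda)}$ from the start, which also satisfies $a^{1-\lambda}b^\lambda = 1$. This gives
\[
\gamma(K)^{1-\lambda}\gamma(L)^\lambda \le \gamma\brac{(1-\lambda) r^{-\lambda} K + \lambda r^{1-\lambda} L}.
\]
Since $r>0$ is arbitrary, taking the infimum over $r>0$ yields \eqref{eq:bilinear-B-thm}.

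For the converse direction, given $a,b$ in Corollary \ref{cor:BB-Thm}, apply \eqref{eq:bilinear-B-thm} to $aK$ and $bL$ with $r = a/b$. Then $r^{-\lambda}a = a^{1-\lambda}b^\lambda$ and $r^{1-\lambda} b = a^{1-\lambda}b^{\lambda}$, which recovers the bilinear (B)-theorem. This justifies calling the two formulations equivalent.

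There is no real obstacle, since all the analytic content sits in Theorem \ref{t:HGBM} through Corollary \ref{cor:BB-Thm}. The only points needing care are the bookkeeping of exponents and checking that the centering condition transfers exactly under the dilations.
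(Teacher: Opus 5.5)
Your proposal is correct and matches the paper's proof: the paper applies Corollary \ref{cor:BB-Thm} to $K/a$, $L/b$ and sets $r=a/b$. Your final choice $a=r^{\lambda}$, $b=r^{-(1-\lambda)}$ is exactly this substitution, normalized so that $a^{1-\lambda}b^{\lambda}=1$, followed by taking the infimum over $r>0$.
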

\begin{proof}
Applying Corollary \ref{cor:BB-Thm} with $K/a$, $L/b$, we obtain
\[
\gamma(K)^{1-\lambda} \gamma(L)^\lambda \le
    \gamma(a^{1-\lambda} b^\lambda ( (1-\lambda) K/a + \lambda L/b)). 
\]
Denoting $r =a/b$, the assertion follows.  
\end{proof}

Using $r = 1$, we see that Corollary \ref{cor:BBE-Thm} improves the usual dimension-free Gaussian Brunn--Minkowski inequality \eqref{e:PL}. 
There is a particularly interesting feature of this formulation: if we could move the infimum over $r > 0$ inside the Gaussian measure, we would obtain
\[
\gamma(K)^{1-\lambda} \gamma(L)^\lambda \le     \gamma\brac{\bigcap_{r > 0} \brac{(1-\lambda) r^{-\lambda} K + \lambda r^{1-\lambda} L}} 
\]
for all origin-symmetric convex $K,L \subset \R^n$. 
Denoting the convex set on the right-hand side by $C$, its support function $h_C : \S^{n-1} \rightarrow \R_+$ satisfies
\[
h_C(\theta) \leq \inf_{r > 0}  (1-\lambda) r^{-\lambda} h_K(\theta) + \lambda r^{1-\lambda} h_L(\theta) = h_K^{1-\lambda}(\theta) h_L^\lambda(\theta), \;\;\; \forall \theta \in \mathbb{S}^{n-1} .
\]
In fact, $C$ is the set-wise largest convex body satisfying this property for all $\theta \in \S^{n-1}$, referred to as the Alexandrov body $A[h_K^{1-\lambda} h_L^\lambda]$. Consequently, this would confirm the following equivalent formulation of the log-Brunn--Minkowski conjecture (see Saroglou \cite{SarMath}):
\begin{equation} \label{eq:GlogBM}
\gamma(K)^{1-\lambda} \gamma(L)^\lambda \le \gamma\brac{A[h_K^{1-\lambda} h_L^\lambda]} . 
\end{equation}

\begin{remark}[Bilinear (B)-theorem implies dimensional Gaussian Brunn--Minkowski]
The above argument shows that the (conjectured) Gaussian log-Brunn--Minkowski inequality (\ref{eq:GlogBM}) logically implies the bilinear (B)-theorem (\ref{eq:bilinear-B-thm}) for all origin-symmetric convex sets $K,L \subset \R^n$ (in fact, for fixed $\lambda \in (0,1)$). It was shown by Livshyts--Marsiglietti--Nayar--Zvavitch \cite{LMNZ} that the (conjectured) Gaussian log-Brunn--Minkowski inequality (for all $\lambda \in (0,1)$) also logically implies the dimensional Gaussian Brunn--Minkowski inequality (\ref{e:GZ}) for all origin-symmetric convex sets $K,L \subset \R^n$ (for all $\lambda \in (0,1)$). An interesting observation is that the weaker bilinear (B)-theorem (for all $\lambda \in (0,1)$) actually suffices to logically imply the dimensional Gaussian Brunn--Minkowski inequality (for all $\lambda \in (0,1)$). In fact, these logical implications do not depend on the Gaussian measure $\gamma$, and as observed in \cite[Proposition 1]{LMNZ} for the ``log-BM $\Rightarrow$ dimensional BM" implication, equally hold for any measure $\mu$ having density $f_\mu$ which is radially non-increasing in all directions -- for example, any $\mu$ with even log-concave density. This implies that $\mu(t A) \geq t^n \mu(A)$ for all $t \in [0,1]$ and measurable $A \subset \R^n$ (see \cite[equation (15)]{LMNZ}), which is the only property we require below. 
Indeed, assume that
\begin{equation} \label{eq:mu-bilinear-B-thm}
\mu(K)^{1-p} \mu(L)^p \le \inf_{r > 0} 
    \mu((1-p) r^{-p} K + p r^{1-p} L) 
\end{equation}
for all origin-symmetric $K,L \subset \R^n$ and $p \in (0,1)$. Given $\lambda,p \in (0,1)$, denote 
\[
r = \frac{\lambda (1-p)}{p (1-\lambda)} ~,~ t = \brac{\frac{1-\lambda}{1-p}}^{1-p} \brac{\frac{\lambda}{p}}^p   , 
\]
and note that $t \in (0,1]$ by AM-GM. These choices ensure
\[
t (1-p)r^{-p}=1-\lambda ~,~ tp r^{1-p}=\lambda .
\]
Applying (\ref{eq:mu-bilinear-B-thm}) to $tK,tL$ and using $\mu(t A) \geq t^n \mu(A)$ yields
\[
\mu((1-\lambda)K+\lambda L) \geq \gamma(tK)^{1-p}\gamma(tL)^p \geq t^n\gamma(K)^{1-p}\gamma(L)^p.
\]
Recalling the definition of $t$ and optimizing over $p \in (0,1)$ as in Corollary \ref{cor:dimensionalGBM}, it follows that
\[
\mu((1-\lambda)K+\lambda L)^{1/n} \geq (1-\lambda) \mu(K)^{1/n} + \lambda \mu(L)^{1/n} . 
\]
\end{remark}

\subsection{Monotonicity}

It is worthwhile noting that there is a bit more flexibility in the choice of covariance matrices for which Theorem \ref{t:HGBM} (and its numerous consequences) remain valid. This is thanks to the following (possibly known) lemma. 

\begin{lemma} \label{lem:monotonicity}
Let $\Sigma_- , \Sigma_+ \in \mathcal{P}_n$ with $\Sigma_- \leq \Sigma_+$, and let $K \subset \R^n$ be convex. Assume that 
\[
\int_K x \, d\gamma_{\Sigma_*}(x) = 0 
\]
for some $* \in \{-,+\}$. Then
\[
\gamma_{\Sigma_-}(K) \geq \gamma_{\Sigma_+}(K) . 
\]
\end{lemma}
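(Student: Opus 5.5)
Interpolate between the two covariances and differentiate. Set $\Sigma_t := (1-t)\Sigma_- + t\Sigma_+$ for $t\in[0,1]$, and put $A := \Sigma_+ - \Sigma_- \geq 0$, so that $\frac{d}{dt}\Sigma_t = A$. Write $f(t) := \gamma_{\Sigma_t}(K)$. The goal is to show that $f$ is non-increasing on $[0,1]$.

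\textbf{Step 1: the derivative.} Differentiating the Gaussian density in its covariance (the heat equation in the covariance variable) gives
\[
f'(t) = \frac12 \int_K \big(\langle \Sigma_t^{-1} A \Sigma_t^{-1} x, x\rangle - \tr(\Sigma_t^{-1} A)\big)\, d\gamma_{\Sigma_t}(x) .
\]
Substitute $x = \Sigma_t^{1/2} y$ and set $B := \Sigma_t^{-1/2} A \Sigma_t^{-1/2} \geq 0$, $K_t := \Sigma_t^{-1/2} K$. This reduces the claim $f'(t) \le 0$ to
\[
\int_{K_t} \langle B y, y\rangle \, d\gamma(y) \le \tr(B)\, \gamma(K_t) .
\]
Diagonalising $B = \sum_i b_i\, e_i \otimes e_i$ with $b_i \ge 0$, it suffices to prove, for each unit vector $e$,
\[
\int_{K_t} \langle y, e\rangle^2 \, d\gamma(y) \le \gamma(K_t) .
\]
This says the standard Gaussian restricted to a convex set has directional variance at most $1$, provided it is centred.

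\textbf{Step 2: centring.} When the restricted measure $\gamma|_{K_t}$ has barycenter $0$, the inequality in Step 1 is the Brascamp--Lieb (or Poincar\'e) variance bound. The measure $\gamma|_{K_t}$ is $1$-uniformly log-concave, so $\mathrm{Var}(\langle y,e\rangle) \le 1$. Centring gives $\int \langle y,e\rangle^2 = \mathrm{Var}$, which is the required bound.

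Without centring, the second moment exceeds the variance by $\langle \mathrm{bar}, e\rangle^2$, and the bound can fail. For example, take $K$ a small ball far from the origin: enlarging the covariance spreads mass toward $K$ and increases its measure. So the hypothesis is genuinely needed, but it is only assumed at one endpoint $*$, not along the whole path.

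\textbf{Main obstacle: centring at only one endpoint.} One approach is to ask whether the barycenter condition propagates along $t$, but I doubt it does in general.

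The approach I would try first avoids this. For $*=-$, replace the path by the one-parameter family $\Sigma_- + s A$ and compare only at the base point, using a convexity argument in the precision matrix. Concretely, write the density ratio as
\[
\frac{d\gamma_{\Sigma_+}}{d\gamma_{\Sigma_-}}(x) = c\, e^{\frac12 \langle (\Sigma_-^{-1} - \Sigma_+^{-1}) x, x\rangle}, \qquad c = \sqrt{\det \Sigma_- / \det \Sigma_+} .
\]
Then bound $\gamma_{\Sigma_+}(K)$ by $\gamma_{\Sigma_-}(K)$ times $\mathbb{E}_{\gamma_{\Sigma_-}|_K}\big[c\, e^{\frac12\langle Mx,x\rangle}\big]$ with $M := \Sigma_-^{-1} - \Sigma_+^{-1} \geq 0$. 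The task is then to show this expectation is at most $1$ when $\gamma_{\Sigma_-}|_K$ is centred. That is a Gaussian-type moment generating bound for a measure that is more log-concave than $\gamma_{\Sigma_-}$. I would prove it via the Brascamp--Lieb inequality or a Caffarelli contraction from $\gamma_{\Sigma_-}$ onto $\gamma_{\Sigma_-}|_K$; whether the centring makes the linear term vanish is the delicate point.

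For $*=+$, I would argue symmetrically using $\gamma_{\Sigma_+}|_K$ and the reverse ratio.

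If neither works cleanly, the fallback is to show that the barycenter condition at one endpoint implies that $t \mapsto \int_K x\, d\gamma_{\Sigma_t}$ is controlled enough along the path to run the derivative argument of Steps 1--2.
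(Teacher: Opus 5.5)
Your Steps 1--2 are correct, but they give $f'(t)\le 0$ only at those $t$ for which $\int_K x\,d\gamma_{\Sigma_t}(x)=0$. As written, they prove the lemma only when centering holds along the whole path (e.g.\ origin-symmetric $K$, which is Anderson's case), and the rest of the proposal does not close this gap. The density-ratio rewriting is an identity, $\gamma_{\Sigma_+}(K)=\gamma_{\Sigma_-}(K)\,\mathbb{E}_{\gamma_{\Sigma_-}|_K}\bigl[c\,e^{\frac12\langle Mx,x\rangle}\bigr]$, so ``show the expectation is at most $1$'' is just the case $*=-$ restated. The Caffarelli idea does not supply it. After reducing $\Sigma_-$ to $\Id$, the contraction map is $1$-Lipschitz for $|\Sigma_-^{-1/2}\,\cdot\,|$, not for the seminorm $|M^{1/2}\,\cdot\,|$ in the integrand. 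It also need not fix the origin, since centering only gives $\int T\,d\gamma_{\Sigma_-}=0$. So no pointwise bound $\langle MT(x),T(x)\rangle\le\langle Mx,x\rangle$ follows. The case $*=-$ is in fact elementary: writing $\gamma_{\Sigma_+}=\gamma_{\Sigma_-}*\gamma_{\Sigma_+-\Sigma_-}$ (second factor possibly degenerate),
\[
\gamma_{\Sigma_+}(K)=\int\gamma_{\Sigma_-}(K-a)\,d\gamma_{\Sigma_+-\Sigma_-}(a)\le\sup_{a}\gamma_{\Sigma_-}(K-a)=\gamma_{\Sigma_-}(K),
\]
where the last equality is \eqref{e:TranslateCentered}. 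Indeed, $a\mapsto\gamma_{\Sigma_-}(K-a)$ is log-concave by Pr\'{e}kopa--Leindler and has zero gradient at $a=0$ by the $\gamma_{\Sigma_-}$-centering.

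The genuine gap is the case $*=+$, which you address only with ``argue symmetrically''. Let $\nu$ be the normalized restriction of $\gamma_{\Sigma_+}$ to $K$. The reverse ratio then requires $\int e^{-\frac12\langle Mx,x\rangle}\,d\nu\ge\int e^{-\frac12\langle Mx,x\rangle}\,d\gamma_{\Sigma_+}$. This is a positive correlation, under $\gamma_{\Sigma_+}$, between the centered convex set $K$ and the even log-concave weight $e^{-\frac12\langle Mx,x\rangle}$. It is not symmetric to $*=-$, for two reasons:
\begin{enumerate}
\item The weight is neither convex nor concave, so a convex-order comparison of $\nu$ with $\gamma_{\Sigma_+}$ does not by itself yield it.
\item The convolution/translation argument runs the wrong way. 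Because $K$ is not $\gamma_{\Sigma_-}$-centered, it yields only $\sup_a\gamma_{\Sigma_-}(K-a)\ge\gamma_{\Sigma_+}(K)$.
\end{enumerate}
The paper obtains exactly this correlation from the centered Gaussian correlation inequality of Nakamura--Tsuji: $\gamma_{\Sigma_0}(K\cap L)\ge\gamma_{\Sigma_1}(K)\gamma_{\Sigma_2}(L)$ for $\Sigma_0\le\Sigma_1,\Sigma_2$, with $K$ $\gamma_{\Sigma_1}$-centered and $L$ $\gamma_{\Sigma_2}$-centered. It applies this with $L=\R^n$, $\Sigma_0=\Sigma_-$, $\Sigma_1=\Sigma_2=\Sigma_+$. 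It then deduces $*=-$ from $*=+$ via $\gamma_{\Sigma_-}(K)\ge\gamma_{\Sigma_-}(K-b)\ge\gamma_{\Sigma_+}(K-b)\ge\gamma_{\Sigma_+}(K)$, with $b$ chosen so that $K-b$ is $\gamma_{\Sigma_+}$-centered. Some correlation input of this kind is the missing idea. Brascamp--Lieb variance bounds at a single centered point, which is all Steps 1--2 use, do not provide it.
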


When $K$ is origin-symmetric this is due to Anderson \cite{And}, but we could not find a reference for the more general case above -- we provide a proof in Section \ref{sec:prelim}. It follows from using $* = -$ in conjunction with  Theorem \ref{t:HGBM} that the centering assumption \eqref{e:Centered} implies
\[
\gamma_{\Gamma_1}(K)^{1-p} \gamma_{\Gamma_2}(L)^p
    \le
    \left(  \left( \frac{1-p}{1-\lambda} \right)^{1-p}  \left( \frac{p}{\lambda} \right)^p \right)^{n}
    \gamma_{\Gamma}((1-\lambda) K + \lambda L). 
\]
 for all $\Gamma_i \geq \Sigma_i$ and $\Gamma \leq \Sigma_1 \sharp_p \Sigma_2$. Note that Theorem \ref{t:HGBM} was formulated with the flexibility of choosing any $\Gamma \leq \Sigma_1 \sharp_p \Sigma_2$, since there is no assumption on the $\gamma_{\Gamma}$ nor $\gamma_{\Sigma_1 \sharp_p \Sigma_2}$ barycenters of $(1-\lambda) K + \lambda L$. The same flexibility on either side of the inequality applies to Theorem \ref{t:Unify} and all of the corollaries of Theorem \ref{t:HGBM} obtained above.

\subsection{The heterovariant Gaussian conjugate Rogers--Shephard inequality}

We next turn to the Gaussian Conjugate Rogers--Shephard Inequality (GCRSI) with different covariances. In our previous work \cite{MNT}, we showed that for convex sets $K,L\subset\R^n$ satisfying $\int_K x\,d\gamma=\int_L x\,d\gamma=0$, it holds that 
\begin{equation}\label{e:GCRSI}
\gamma(K) \gamma(L) \le \gamma(K \cap L) \gamma( K +L).
\end{equation}
For origin-symmetric convex sets, this inequality was originally conjectured by Tehranchi \cite{Teh}. The following result extends the covariance comparison to all four Gaussian terms.

\begin{theorem}[Heterovariant GCRSI]\label{t:TehCov}
    Let $\Sigma_1, \Sigma_2, \Gamma_1, \Gamma_2 \in \mathcal{P}_n$. 
    Suppose that 
    \begin{equation}\label{e:TehCovCond}
    \Sigma_1 \le \Gamma_2, \;\;\;
    \Sigma_2 \le \Gamma_2, \;\;\;
    \Gamma_1^{-1} + \Gamma_2^{-1} \le \Sigma_1^{-1} + \Sigma_2^{-1}, 
    \;\;\;
    {\rm det}\, \Sigma_1 {\rm det}\, \Sigma_2 \ge {\rm det}\, \Gamma_1 {\rm det}\, \Gamma_2.  
    \end{equation}
    Then for any convex sets $K, L \subset \R^n$ satisfying 
    $$
    \int_{K} x \, d\gamma_{\Sigma_1}(x) = \int_{L} x \, d\gamma_{\Sigma_2}(x) = 0, 
    $$
    it holds that 
    $$
    \gamma_{\Sigma_1}(K) \gamma_{\Sigma_2}(L)
    \le
    \gamma_{\Gamma_1}(K \cap L) \gamma_{\Gamma_2}(K + L). 
    $$
\end{theorem}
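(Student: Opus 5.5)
The plan is to deduce the statement from the Gaussian Forward--Reverse Brascamp--Lieb inequality of our previous work \cite{MNT}, in the same way the homovariant GCRSI \eqref{e:GCRSI} was obtained there. Write $f_1=1_K$ and $f_2=1_L$. The left-hand side is $\int f_1\,d\gamma_{\Sigma_1}\int f_2\,d\gamma_{\Sigma_2}$, which is a Gaussian integral of $f_1(x)f_2(y)$ on $\R^{2n}$ with block-diagonal covariance $\diag(\Sigma_1,\Sigma_2)$. The right-hand side involves the two functions
\[
h_1(u)=\sup_{x=y=u} f_1(x)f_2(y)=1_{K\cap L}(u),\qquad h_2(v)=\sup_{x+y=v}f_1(x)f_2(y)=1_{K+L}(v).
\]
We want
\[
\int f_1 f_2\, d\gamma_{\diag(\Sigma_1,\Sigma_2)} \le \int h_1\,d\gamma_{\Gamma_1}\int h_2\,d\gamma_{\Gamma_2}.
\]
This has the form of a forward-reverse Brascamp--Lieb inequality with the linear maps $(x,y)\mapsto x$, $(x,y)\mapsto y$ on the input side and the pair $(x,y)\mapsto (x+y)$ together with the diagonal embedding on the output side, but in a Gaussian-weighted, centered form. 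The strategy is therefore to invoke the Gaussian FRBL theorem from \cite{MNT}: for inputs that are log-concave or indicators of convex sets with the appropriate centering, the best constant is attained by centered Gaussians, and the centering hypothesis on $K,L$ is exactly what that theorem needs.

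\textbf{Step 1.} Cast the inequality in the Lebesgue framework. Absorb the Gaussian densities into the functions, replacing $f_i$ by $1_K e^{-\frac12\langle\Sigma_1^{-1}x,x\rangle}$, and similarly for the others. Record the determinant normalisation constants $(\det 2\pi\Sigma_i)^{-1/2}$ and $(\det 2\pi\Gamma_i)^{-1/2}$.

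\textbf{Step 2.} Apply the Gaussian saturation theorem. This reduces the inequality to checking it when $K$ and $L$ are replaced by centered Gaussian weights $e^{-\frac12\langle A x,x\rangle}$ and $e^{-\frac12\langle B y,y\rangle}$ with $A,B\ge 0$ positive semidefinite. The limiting cases $A=0$ or $B=0$ model $K=\R^n$ or $L=\R^n$.

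\textbf{Step 3.} Verify the Gaussian case. Here the left side equals
\[
\det(\Id+\Sigma_1A)^{-1/2}\det(\Id+\Sigma_2B)^{-1/2}.
\]
On the right, the supremum over $x=y=u$ produces the quadratic form $A+B$, giving the factor $\det(\Id+\Gamma_1(A+B))^{-1/2}$. The supremum over $x+y=v$ produces the parallel sum $A:B=(A^{-1}+B^{-1})^{-1}$, giving the factor $\det(\Id+\Gamma_2(A:B))^{-1/2}$. So the claim becomes
\[
\det(\Id+\Gamma_1(A+B))\,\det(\Id+\Gamma_2(A:B))\le \det(\Id+\Sigma_1A)\,\det(\Id+\Sigma_2B)\quad\text{for all }A,B\ge0.
\]
The four hypotheses in \eqref{e:TehCovCond} should match the natural regimes of this inequality.
\begin{itemize}
\item At $A=B=0$ it holds with equality.
\item As $A,B\to\infty$ along a common direction, the leading terms compare $\det\Gamma_1\det\Gamma_2$ with $\det\Sigma_1\det\Sigma_2$, which is the determinant condition.
\item When only $A\to\infty$, so that $A:B\to B$, the comparison uses $\Gamma_1$ against $\Sigma_1$ and $\Gamma_2$ against $\Sigma_2$, which is the role of $\Sigma_i\le\Gamma_2$.
\item At first order near zero it reduces to $\tr((\Gamma_1+\Gamma_2/4)\cdots)$-type comparisons governed by $\Gamma_1^{-1}+\Gamma_2^{-1}\le \Sigma_1^{-1}+\Sigma_2^{-1}$.
\end{itemize}

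\textbf{Main obstacle.} The hard part is proving the determinant inequality globally in $(A,B)$, not just in these asymptotic regimes. I would first use monotonicity: replacing $\Gamma_2$ by something larger only helps on the left via Lemma~\ref{lem:monotonicity} style reasoning, but on the right the dependence is monotone decreasing, so it can be reduced to extremal cases. Next, take logarithms and exploit the concavity of $\log\det(\Id+\cdot)$ together with the operator-concavity of the parallel sum, so that the extremal configurations reduce to $A,B$ supported on common subspaces, where the four hypotheses apply directly. If this direct route fails, the fallback is to use the general FRBL criterion from \cite{MNT}, which expresses the best constant as a supremum over subspaces as in \eqref{e:FullHGBMCoeff}. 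One would then check that \eqref{e:TehCovCond} makes every subspace term at most $1$, using Ando-type inequalities for restrictions as in the proof of Theorem~\ref{t:HGBM}.
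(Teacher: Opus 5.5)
There is a genuine gap: the determinant inequality you reduce to in Step 3 is false, so no global argument can prove it. Take $n=1$ and $\Sigma_1=\Sigma_2=\Gamma_1=\Gamma_2=1$, the homovariant case, which satisfies \eqref{e:TehCovCond}. With $A=a$ and $B=b>0$ your claim reads $(1+a+b)\bigl(1+\tfrac{ab}{a+b}\bigr)\le(1+a)(1+b)$. But the left side exceeds the right by exactly $\tfrac{ab}{a+b}$, so the inequality already fails at first order near $0$, contrary to your last bullet.

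The error enters in Step 2. You treat the two output terms separately: $K\cap L$ is modelled by the product $f_1f_2$, and $K+L$ by the sup-convolution. Theorem \ref{t:FRBLMNT}, however, needs a single joint pointwise hypothesis $f_1(x)f_2(y)\le h_1(L_1(x,y))\,h_2(L_2(x,y))$ with surjective linear maps $L_j$ defined on $\R^{2n}$. A diagonal embedding $u\mapsto(u,u)$ is not such a map. Nor is there, in general, a surjective linear $L_1:\R^{2n}\to\R^n$ with $L_1(K\times L)\subseteq K\cap L$. The functional inequality your Gaussian check encodes, $\int f_1\,d\gamma\int f_2\,d\gamma\le\int f_1f_2\,d\gamma\int (f_1\star f_2)\,d\gamma$, is simply not true.

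The missing idea is how to realize $\gamma_{\Gamma_1}(K\cap L)$ inside the FRBL framework. The paper takes $h_1=1_{K\times L}$ on $\R^{2n}$ with $L_1=\Id_{2n}$ and the degenerate covariance $\widetilde\Gamma_1=\begin{pmatrix}\Gamma_1&\Gamma_1\\ \Gamma_1&\Gamma_1\end{pmatrix}$. Theorem \ref{t:FRBLMNT} allows positive semi-definite output covariances for exactly this purpose, and then $\int h_1\,d\gamma_{\widetilde\Gamma_1}=\gamma_{\Gamma_1}(K\cap L)$. The second output is $h_2=1_{K+L}$ with $L_2(x,y)=x+y$.

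The Gaussian test then ranges over $A_1,A_2,C\in\mathcal P_n$ and $B\in\mathcal P_{2n}$ subject to the coupled constraint $\diag(A_1,A_2)\ge B+\begin{pmatrix}C&C\\ C&C\end{pmatrix}$. This constraint forces $\bar B\le A_1+A_2-4C$ and $C\le A_1,A_2$. That slack relative to your $A+B$ is what makes the inequality true; the paper only needs $\bar B\le A_1+A_2-C$.

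The resulting determinant inequality is proved directly, not by asymptotic regimes or concavity. The steps are, in order:
\begin{enumerate}
\item Lemma \ref{lem:det-monotonic}.
\item The hypothesis $\Gamma_1^{-1}+\Gamma_2^{-1}\le\Sigma_1^{-1}+\Sigma_2^{-1}$.
\item The determinantal correlation inequality, Lemma \ref{l:GCIGauss}, with $\mathcal A=\Sigma_1^{-1}-\Gamma_2^{-1}+A_1-C$, $\mathcal B=\Sigma_2^{-1}-\Gamma_2^{-1}+A_2-C$ and $\mathcal C=\Gamma_2^{-1}+C$. These are positive definite by $\Sigma_i\le\Gamma_2$ and $A_i\ge C$.
\item The determinant hypothesis.
\end{enumerate}

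Your fallback, a subspace formula like \eqref{e:FullHGBMCoeff}, is also not available. That formula was derived for the Brunn--Minkowski datum specifically; it is not a general feature of Theorem \ref{t:FRBLMNT}.
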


Taking $\Sigma_1=\Sigma_2 = \Gamma_1 = \Gamma_2 = {\rm Id}_n$ immediately recovers \eqref{e:GCRSI}. 

\begin{remark}
    The assumption \eqref{e:TehCovCond} forces $\Gamma_1 \le \Sigma_1, \Sigma_2$ and $\Gamma_1^{-1} + \Gamma_2^{-1} = \Sigma_1^{-1} + \Sigma_2^{-1}$. 
    To see this, let $X_i := \Gamma_2^{-\frac12} \Sigma_i \Gamma_2^{-\frac12}$ for $i=1,2$ and $Y:= \Gamma_2^{-\frac12} \Gamma_1 \Gamma_2^{-\frac12}$. Then \eqref{e:TehCovCond} may be reformulated as 
    \begin{equation}\label{e:TehCovCond'}
    X_1, X_2 \le {\rm Id}_n, \;\;\; Y^{-1} + {\rm Id}_n \le X_1^{-1} + X_2^{-1}, 
    \;\;\;
    {\rm det}\, X_1 {\rm det}\, X_2 \ge {\rm det}\, Y. 
    \end{equation}
    Then the second and first inequalities yield that 
    \begin{align*}
        ({\rm det}\, Y)^{-1}
        &\le
        {\rm det}\, (X_1^{-1} + X_2^{-1} - {\rm Id}_n)
        \\
        &
        =
        {\rm det}\, (X_1^{-1}X_2^{-1} - (X_1^{-1} - {\rm Id}_n)(X_2^{-1} - {\rm Id}_n) )
        \le
        {\rm det}\, (X_1^{-1}X_2^{-1}).
    \end{align*}
    Hence by the third inequality in \eqref{e:TehCovCond'}, we have 
    $$
    {\rm det}\,  Y = {\rm det}\, X_1 {\rm det}\, X_2. 
    $$
    This fact implies that 
    $$
    Y^{-1} + {\rm Id}_n = X_1^{-1} + X_2^{-1}, 
    $$
    which yields $\Gamma_1^{-1} + \Gamma_2^{-1} = \Sigma_1^{-1} + \Sigma_2^{-1}$. 
    Moreover combining this with the first inequality in \eqref{e:TehCovCond'}, we deduce $Y \le X_1, X_2$. This yields $\Gamma_1 \le \Sigma_1, \Sigma_2$. 
\end{remark}

In our previous work \cite{MNT}, we considered not only the GCRSI \eqref{e:GCRSI}, but also the conjugate Milman--Pajor inequality and related Gaussian inequalities. It is natural to expect analogous results for these Gaussian inequalities with different covariance matrices. However, we do not pursue this direction here and leave it to the interested reader.

\subsection{Gaussian mixture inequalities}
Finally, we obtain various geometric inequalities for Gaussian mixtures derived from the preceding Gaussian inequalities. 
Given a  probability measure $\nu$ on $(0, \infty)^n$, recall the definition of a Gaussian mixture $\gamma_\nu$ from (\ref{e:GaussMix}). 
Several geometric and information-theoretic properties of Gaussian mixtures, including extensions of the (B)-theorem, were investigated in \cite{ENT, EG}.

Our first result  for Gaussian mixtures is the dimensional Brunn--Minkowski inequality, which is a consequence of Theorem \ref{t:BMCov}. 
\begin{corollary}[Dimensional Brunn--Minkowski inequality for Gaussian mixtures] \label{c:GaussMixDimBM}
    Let $\lambda \in (0,1)$, and let $\nu$ be a probability measure on $(0, \infty)^n$ such that its density $f_\nu$ satisfies that $\R^n \ni u \mapsto f_\nu(e^u)$ is log-concave. 
    Then for any convex sets $K, L \subset \R^n$ such that 
    $$
    \int_{K} x \, d\gamma_{\Delta(z)}(x) = \int_{L} x \, d\gamma_{\Delta(z)}(x) = 0, \;\;\; \forall\, z \in {\rm supp}(\nu),  
    $$
    it holds that 
    \begin{equation}\label{e:GaussMixBM}
    (1-\lambda) \gamma_{\nu}(K)^{1/n} + \lambda \gamma_{\nu}(L)^{1/n}
    \le
    \gamma_{\nu}((1-\lambda) K + \lambda L)^{1/n}. 
    \end{equation}
\end{corollary}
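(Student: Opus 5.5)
The plan is to prove the inequality pointwise in the covariance variable, using Theorem \ref{t:BMCov} with diagonal covariances, and then integrate over $\nu$ by a Prékopa--Leindler argument in logarithmic coordinates. At the end we optimize over $p$, as in Corollary \ref{cor:dimensionalGBM}.

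Fix $p\in(0,1)$ and write $c_{p,\lambda} := \left(\frac{1-p}{1-\lambda}\right)^{1-p}\left(\frac{p}{\lambda}\right)^{p}$. For $u,v\in\R^n$ with $e^u,e^v\in{\rm supp}(\nu)$, set $\Sigma_1=\Delta(e^u)$ and $\Sigma_2=\Delta(e^v)$. Diagonal matrices commute, so
\[
\Sigma_1\sharp_p\Sigma_2=\Delta(e^{(1-p)u+pv}) =: \Gamma .
\]
The centering hypothesis gives \eqref{e:Centered} for these $\Sigma_1,\Sigma_2$. Theorem \ref{t:BMCov} therefore yields
\[
\gamma_{\Delta(e^u)}(K)^{1-p}\,\gamma_{\Delta(e^v)}(L)^{p}\le c_{p,\lambda}^n\,\gamma_{\Delta(e^{(1-p)u+pv})}(M), \qquad M:=(1-\lambda)K+\lambda L .
\]

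Next, change variables $t=e^u$. Then
\[
\gamma_\nu(A)=\int_{\R^n}\gamma_{\Delta(e^u)}(A)\,g(u)\,du, \qquad g(u):=f_\nu(e^u)\,e^{u_1+\dots+u_n} .
\]
The function $g$ is log-concave, since $f_\nu(e^u)$ is log-concave by assumption and the Jacobian factor is log-affine. Define
\[
F(u)=\gamma_{\Delta(e^u)}(K)\,g(u),\qquad G(v)=\gamma_{\Delta(e^v)}(L)\,g(v),\qquad H(w)=c_{p,\lambda}^n\,\gamma_{\Delta(e^w)}(M)\,g(w).
\]
Combining the pointwise inequality with $g((1-p)u+pv)\ge g(u)^{1-p}g(v)^{p}$ gives $H((1-p)u+pv)\ge F(u)^{1-p}G(v)^p$ for all $u,v$. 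When $u$ or $v$ lies outside $\log{\rm supp}(\nu)$, $F$ or $G$ vanishes there and the inequality is trivial. Prékopa--Leindler with parameter $p$ then gives
\[
\gamma_\nu(K)^{1-p}\gamma_\nu(L)^p\le c_{p,\lambda}^n\,\gamma_\nu(M) .
\]

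Finally, optimize over $p\in(0,1)$ exactly as in the derivation of Corollary \ref{cor:dimensionalGBM}. Writing $a=\gamma_\nu(K)^{1/n}$ and $b=\gamma_\nu(L)^{1/n}$, the quantity $\sup_p c_{p,\lambda}^{-1}a^{1-p}b^p$ equals $(1-\lambda)a+\lambda b$, attained at $p=\lambda b/((1-\lambda)a+\lambda b)$. If $a$ or $b$ vanishes, the claim follows from monotonicity of $\gamma_\nu$ after translating nothing, or by a limiting argument in $p\to 0,1$.

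The main obstacle I expect is measure-theoretic. The density assumption implicitly requires $\nu$ to be absolutely continuous. The centering condition is imposed only on ${\rm supp}(\nu)$, so Theorem \ref{t:BMCov} is applied only there, and one must check that this is harmless. It is, because $F$ and $G$ vanish off $\log{\rm supp}(\nu)$ while the target $H$ is simply nonnegative. One also needs measurability of $u\mapsto\gamma_{\Delta(e^u)}(A)$, which follows from its continuity in $u$.
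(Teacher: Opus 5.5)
Your proposal is correct and is essentially the paper's proof. The paper first proves Proposition \ref{p:GaussMixBM}, which carries extra diagonal scalings $\Delta(s),\Delta(t)$, by the same route you take: apply Theorem \ref{t:BMCov} pointwise to diagonal (hence commuting) covariances and their $p$-geometric mean, then integrate via Pr\'ekopa--Leindler in logarithmic coordinates. It then specializes to $\Delta(s)=\Delta(t)=\Id_n$ and optimizes over $p$, exactly as you do.

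One caveat concerns your aside on the case $\gamma_\nu(K)=0$ or $\gamma_\nu(L)=0$, which does not work. The $p$-inequality then has left side $0$ for every $p\in(0,1)$, so no limit in $p$ helps. In fact the statement genuinely fails for lower-dimensional sets, e.g.\ $K=\{x_0\}$ with $|x_0|$ large and $L=[-1,1]^n$. You should simply assume $K,L$ have nonempty interior, so that $\gamma_\nu(K),\gamma_\nu(L)>0$, as the paper tacitly does.
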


The Brunn--Minkowski inequality for general even log-concave measures is one of the central problems in contemporary convex geometry. 
The two-dimensional case has already been established by Livshyts--Marsiglietti--Nayar--Zvavitch \cite{LMNZ}, as a consequence of the two-dimensional log-Brunn--Minkowski conjecture for the Lebesgue proved by B\"{o}r\"{o}czky--Lutwak--Yang--Zhang \cite{BLYZ} and its extension to general even log-concave measures by Saroglou \cite{SarMath}. 
However, the general-dimensional case remains open, and the current best result, allowing a weaker exponent, is due to Yang \cite{Yang} (following prior contributions in \cite{LivshytsTAMS, EGT}). 
See \cite{Yang} and the references therein for more details on the log-Brunn--Minkowski conjecture. 
Note that our Gaussian mixture \eqref{e:GaussMix} need not be log-concave in general, and thus Corollary \ref{c:GaussMixDimBM} lies outside this regime. 
For example, it is known from \cite{ENT} that, for $p \in (0, 1]$, the distribution on $\R^n$ with density proportional to $e^{-\sum_{i=1}^n |x_i|^p}$  satisfies the assumption in Corollary \ref{c:GaussMixDimBM}, although it is not log-concave for $0<p<1$.  
The Brunn--Minkowski inequality for Gaussian mixtures was also studied by Eskenazis--Nayar--Tkocz \cite{ENT}, who established \eqref{e:GaussMixBM} for origin-symmetric convex sets $K,L$ that are homothetic to each other. 
Corollary \ref{c:GaussMixDimBM} extends their result to arbitrary pairs of convex sets $K,L\subset \R^n$ satisfying the prescribed barycenter condition. 
For Brunn--Minkowski inequalities for other measures beyond the log-concave setting, see \cite{C-ERAnn, LMNZ, Mars}.

Our second result for Gaussian mixtures is the conjugate Rogers--Shephard inequality (CRSI). 
Before stating it, let us introduce the notion of log-supermodularity. 
A nonnegative function $f$ on $(0, \infty)^n$ is said to be log-supermodular if
$$
f(x) f(y) \le f(x \vee y) f(x \wedge y), \;\;\; \forall x, y \in (0, \infty)^n, 
$$
where for $x=(x_1, \dots, x_n)$ and $y=(y_1, \dots, y_n)$ in $\R^n$, the vectors $x \vee y, x \wedge y \in \R^n$ are defined by
$$
x \vee y := ( \max(x_1, y_1), \dots, \max(x_n, y_n)) ~,~ 
x \wedge y := ( \min(x_1, y_1), \dots, \min(x_n, y_n)).
$$

\begin{corollary}[CRSI for Gaussian mixtures] \label{c:TehGaussMix}
    Let $\nu$ be a probability measure on $(0, \infty)^n$ with log-supermodular density $f_\nu$. 
    Then for any convex sets $K, L \subset \R^n$ such that 
    $$
    \int_{K} x \, d\gamma_{\Delta(s)}(x) = \int_{L} x \, d\gamma_{\Delta(s)}(x) = 0, \;\;\; \forall\, s \in {\rm supp}(\nu),  
    $$
    it holds that 
    $$
    \gamma_{\nu}(K) \gamma_{\nu}(L)
    \le
    \gamma_{\nu}(K \cap L) \gamma_{\nu}(K + L).  
    $$
\end{corollary}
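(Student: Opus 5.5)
The plan is to deduce Corollary \ref{c:TehGaussMix} from the heterovariant GCRSI (Theorem \ref{t:TehCov}) applied pointwise in the mixing variables, and then to integrate using the four functions theorem of Ahlswede--Daykin (the log-supermodular analogue of Pr\'ekopa--Leindler on the distributive lattice $(0,\infty)^n$ with coordinatewise $\vee,\wedge$).

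\textbf{Pointwise step.} Fix $s,t \in {\rm supp}(\nu)$. Take $\Sigma_1 = \Delta(s)$, $\Sigma_2 = \Delta(t)$, $\Gamma_1 = \Delta(s \wedge t)$ and $\Gamma_2 = \Delta(s\vee t)$. All four are diagonal, so the hypotheses \eqref{e:TehCovCond} can be checked coordinatewise:
\begin{itemize}
\item $\Sigma_i \le \Gamma_2$ holds trivially.
\item Since $\{\min(s_i,t_i),\max(s_i,t_i)\} = \{s_i,t_i\}$, we have $\Gamma_1^{-1}+\Gamma_2^{-1} = \Sigma_1^{-1}+\Sigma_2^{-1}$.
\item For the same reason, $\det\Gamma_1\det\Gamma_2 = \det\Sigma_1\det\Sigma_2$.
\end{itemize}
The centering hypothesis of Theorem \ref{t:TehCov} needs $\int_K x\,d\gamma_{\Delta(s)}=0$ and $\int_L x\,d\gamma_{\Delta(t)}=0$, and both hold by assumption. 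We therefore obtain, for all $s,t\in{\rm supp}(\nu)$,
\[
\gamma_{\Delta(s)}(K)\,\gamma_{\Delta(t)}(L) \le \gamma_{\Delta(s\wedge t)}(K\cap L)\,\gamma_{\Delta(s\vee t)}(K+L).
\]

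\textbf{Integration step.} Define
\[
F_1(s)=\gamma_{\Delta(s)}(K)f_\nu(s),\quad F_2(t)=\gamma_{\Delta(t)}(L)f_\nu(t),\quad F_3(u)=\gamma_{\Delta(u)}(K\cap L)f_\nu(u),\quad F_4(u)=\gamma_{\Delta(u)}(K+L)f_\nu(u).
\]
Multiplying the pointwise inequality by $f_\nu(s)f_\nu(t)\le f_\nu(s\vee t)f_\nu(s\wedge t)$ (log-supermodularity) gives
\[
F_1(s)F_2(t)\le F_3(s\wedge t)F_4(s\vee t).
\]
The continuous Ahlswede--Daykin theorem on $(0,\infty)^n$ with Lebesgue measure, which is a product measure and hence log-modular, then yields
\[
\int F_1\int F_2\le \int F_3\int F_4.
\]
This is exactly $\gamma_\nu(K)\gamma_\nu(L)\le \gamma_\nu(K\cap L)\gamma_\nu(K+L)$.

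\textbf{Support issue.} The main subtlety is that the pointwise inequality is only available for $s,t\in{\rm supp}(\nu)$. Off the support we define $F_1=F_2=0$, so the Ahlswede--Daykin hypothesis holds trivially whenever $s$ or $t$ lies outside ${\rm supp}(\nu)$. For $s,t$ in the support, if $f_\nu(s)f_\nu(t)>0$ then log-supermodularity forces $f_\nu(s\wedge t),f_\nu(s\vee t)>0$, so the right-hand side is also correctly accounted for.

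A second, measure-theoretic point is that the four functions theorem in $\R^n$ requires the hypothesis almost everywhere. One can prove it by induction on dimension from the one-dimensional case, or quote it directly. I would simply cite it. A further option is to pass to $u=\log s$ coordinates, where $\vee,\wedge$ are preserved; this is unnecessary, since the Jacobian is a product and hence log-modular.
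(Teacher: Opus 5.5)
Your proposal is correct and follows the paper's proof essentially step for step. You make the same choice $\Sigma_1=\Delta(s)$, $\Sigma_2=\Delta(t)$, $\Gamma_1=\Delta(s\wedge t)$, $\Gamma_2=\Delta(s\vee t)$ in Theorem \ref{t:TehCov}, multiply the resulting pointwise inequality by $f_\nu(s)f_\nu(t)\le f_\nu(s\wedge t)f_\nu(s\vee t)$, and conclude with the four functions theorem. Setting $F_1=F_2=0$ off ${\rm supp}(\nu)$ is a slightly more careful treatment of the support issue, which the paper passes over silently.
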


In particular, since $\gamma_\nu$ is a probability measure, we also obtain the following correlation inequality for Gaussian mixtures under the same assumption:
\begin{equation} \label{e:GCI}
\gamma_{\nu}(K) \gamma_{\nu}(L)
    \le
    \gamma_{\nu}(K \cap L). 
\end{equation}
The original correlation inequality for the Gaussian measure $\gamma$ and arbitrary origin-symmetric convex sets was established by Royen \cite{Roy}. A second proof was obtained in \cite{M}, and the symmetry assumption was replaced by a Gaussian barycenter condition in \cite{NT}. 
The correlation inequality \eqref{e:GCI} for origin-symmetric convex $K,L$ was previously established for $\gamma_\nu$ which is a product of one-dimensional Gaussian mixtures (corresponding to a product measure $\nu$, whose density is trivially log-supermodular) by Eskenazis--Nayar--Tkocz \cite{ENT}. 

As mentioned before, it is natural to expect analogous conjugate Milman--Pajor and related inequalities, as developed in \cite{MNT}, for Gaussian mixtures. However, we do not pursue these questions here and leave them to the interested reader.

The rest of this paper is organized as follows. 
In Section \ref{sec:prelim}, we recall the forward-reverse Brascamp--Lieb inequality established in our previous work \cite{MNT} and present several inequalities for positive definite matrices that will be used in the subsequent sections. 
In Section \ref{sec:3}, we prove the dimensional Brunn--Minkowski inequality for Gaussian measures with different covariance matrices and for Gaussian mixtures. 
Section \ref{sec:4} is devoted to the conjugate Rogers--Shephard inequality for Gaussian measures with different covariance matrices and Gaussian mixtures.

\section*{Acknowledgments}
We thank Nathael Gozlan for suggesting that we consider the conjugate Rogers--Shephard inequality for Gaussian mixtures.

\section*{AI use statement}

The results of this work were obtained prior to OpenAI's October 6, 2026 unveiling, while the paper was being written. 
In the age of AI domination and proliferation, we are proud to declare that no AI was used in the derivation of any of the mathematical proofs or ideas, with the exception of testing the validity of (the linear algebra) Lemmas \ref{l:Diagonalize} and \ref{l:TehCovMat}. The paper was written entirely by the authors, but AI was used to prune the paper for typos and insert references. The authors take full responsibility for everything written.

\section{Preliminaries} \label{sec:prelim}

\subsection{The Gaussian Forward-Reverse Brascamp--Lieb inequality}

The main tool used throughout this paper is the Gaussian Forward-Reverse Brascamp--Lieb inequality established in our previous work \cite{MNT}. 
For further background on this inequality and its applications, see \cite{MNT} and the references therein. For $A \in \mathcal{P}_n$, we denote the centered Gaussian $g_A(x) := \exp(-\frac{1}{2} \scalar{Ax,x})$. 

\begin{theorem}[{\cite[Theorem 1.10]{MNT}}]\label{t:FRBLMNT}
    Let $E= \oplus_{i=1}^I \R^{n_i}$. Let $L_j:E \to \R^{m_j}$ be linear surjective maps, $j=1, \dots, J$, let $(c_i)_{i=1}^I \in (0, \infty)^I$ and $(d_j)_{j=1}^J \in (0, \infty)^J$, and let $\mathcal{Q}$ be a symmetric matrix on $E$. 
    Let $\Sigma_i \in \mathcal{P}_{n_i}$ for  $i=1 \dots, I$, and let $\Gamma_j$ be an $m_j \times m_j$ positive semi-definite matrix for $j=1 \dots, J$. 
    Then for all log-concave $f_i :\R^{n_i} \to \R_+$ so that $\int_{\R^{n_i}} x f_i(x)\, d\gamma_{\Sigma_i}(x)=0$, $i=1, \dots, I$ and for all log-concave $h_j :\R^{m_j} \to \R_+$, $j=1, \dots, J$, it holds that 
    \begin{align*}
        &\prod_{i=1}^I f_i(x_i)^{c_i} \le e^{-\frac12 \langle \mathcal{Q}x, x \rangle} \prod_{j=1}^J h_j(L_jx)^{d_j}, 
        \;\;\; \forall x=(x_1, \dots, x_I) \in \bigoplus_{i=1}^I \R^{n_i}=E
        \\
        &\Rightarrow \;
        \prod_{i=1}^I \left( \int_{\R^{n_i}} f_i\, d\gamma_{\Sigma_i} \right)^{c_i}
        \le
        {\rm FR}_{LC}^{(\mathcal{G})}
        \prod_{j=1}^J \left( \int_{\R^{m_j}} h_j\, d\gamma_{\Gamma_j} \right)^{d_j}. 
    \end{align*}
    Here ${\rm FR}_{LC}^{(\mathcal{G})} \in (0, \infty]$ is the best constant when $f_i$ and $h_j$ in the above implication are restricted to centered Gaussians $g_{A_i}$ and $g_{B_j}$, i.e., 
    $$
    {\rm FR}_{LC}^{(\mathcal{G})}
    :=
    \sup \left\{
    \frac{\prod_{j=1}^J {\rm det}\, ({\rm Id}_{m_j} + \Gamma_j B_j)^{d_j/2}}{\prod_{i=1}^I {\rm det}\, ({\rm Id}_{n_i} + \Sigma_i A_i)^{c_i/2}}
    \, :\, 
    {\rm diag}(c_1A_1, \dots, c_IA_I)
    \ge 
    \mathcal{Q} + \sum_{j=1}^J d_j L_j^* B_j L_j
    \right\}, 
    $$
    where the supremum is taken over all $A_i \in \mathcal{P}_{n_i}$ and $B_j \in \mathcal{P}_{m_j}$, and $L_j^*:\R^{m_j} \to E$ is the adjoint operator of $L_j$, $j=1, \dots, J$.  
\end{theorem}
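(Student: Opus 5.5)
The plan is to prove the implication by \emph{Gaussian saturation}. We show that the extremal ratio over log-concave $f_i,h_j$ is never larger than the extremal ratio over centered Gaussians, which by definition is ${\rm FR}_{LC}^{(\mathcal{G})}$.

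\textbf{Reductions.} First we normalize. Replacing $x_i$ by $\Sigma_i^{1/2}x_i$ and $y_j$ by a suitable linear image absorbs $\Sigma_i$, modifies $\mathcal{Q}$ and $L_j$, and reduces to $\Sigma_i=\Id$. A limiting argument, adding $\eps\,\Id$ to $\Gamma_j$ and letting $\eps\downarrow0$, handles degenerate $\Gamma_j$. We may also assume ${\rm FR}_{LC}^{(\mathcal{G})}<\infty$ and that $f_i,h_j$ are smooth, strictly log-concave and integrable, by truncating and convolving with small Gaussians.

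\textbf{Semigroup flow.} Fix admissible data and run an Ornstein--Uhlenbeck-type semigroup simultaneously on all functions: $f_i^t:=P^{(i)}_t f_i$ and $h_j^t:=P^{(j)}_t h_j$. The generators are chosen adapted to the quadratic form $\mathcal{Q}$ and the maps $L_j$, in the spirit of the Carlen--Lieb--Loss / Bennett--Carbery--Christ--Tao flow proofs of Brascamp--Lieb, so that as $t\to\infty$ every $f_i^t,h_j^t$ becomes a centered Gaussian. We then need two properties along the flow:
\begin{enumerate}[label=(\alph*)]
\item the pointwise hypothesis $\prod f_i^{c_i}\le e^{-\frac12\langle\mathcal{Q}x,x\rangle}\prod h_j(L_jx)^{d_j}$ persists for every $t$;
\item the ratio $\prod(\int f_i^t\,d\gamma)^{c_i}\big/\prod(\int h_j^t\,d\gamma_{\Gamma_j})^{d_j}$ is monotone non-decreasing in $t$.
\end{enumerate}
Given (a) and (b), the limiting Gaussians satisfy the hypothesis with some admissible $(A_i,B_j)$, so their ratio is at most ${\rm FR}_{LC}^{(\mathcal{G})}$, which yields the claim.

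\textbf{Fallback by transport.} If designing such a flow proves awkward, the alternative is a Barthe-type transport argument. Pick $(A_i,B_j)$ attaining, or nearly attaining, the Gaussian supremum. Let $T_i$ be the Brenier map from the Gaussian $g_{A_i}\gamma$ (normalized) to $f_i\gamma$ (normalized). Since $f_i$ is log-concave with respect to $\gamma$, Caffarelli's contraction theorem gives $DT_i\le (\Id+A_i)^{1/2}$-type bounds, that is, an upper bound on $DT_i$ in the ordering relative to $\Id+A_i$. We would then change variables in $\int h_j\,d\gamma_{\Gamma_j}$ along the map $y\mapsto L_j(T_1x_1,\dots,T_Ix_I)$, combined with the Gaussian equality conditions for $(A_i,B_j)$. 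The pointwise hypothesis bounds the integrand from below. A log-det concavity inequality (Ky Fan), applied to the Jacobian, compares it with the Gaussian determinant ratio.

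\textbf{Main obstacle.} The step I expect to be hardest is the role of the centering $\int x f_i\,d\gamma_{\Sigma_i}=0$. In the flow approach it is exactly what kills the first-order (linear) terms in the derivative of $\log\int f_i^t$. For non-centered log-concave data the monotonicity fails, and the constant would have to be attained by translated Gaussians, which would give a larger supremum. The key technical lemma to establish is therefore a second-order inequality for a centered $\gamma$-log-concave density $f$: its covariance under $f\gamma$ is at most $\Id$ (Brascamp--Lieb variance inequality), and zero barycenter lets one compare $\int f\,d\gamma$ with the Gaussian having the same covariance. I would first try to prove monotonicity (b) in the one-function case with these two facts, then assemble the multi-function case via a block-matrix positivity argument, using the constraint ${\rm diag}(c_iA_i)\ge\mathcal{Q}+\sum d_jL_j^*B_jL_j$.
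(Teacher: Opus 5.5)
The paper does not prove this statement; it quotes it from \cite[Theorem 1.10]{MNT} and uses it as a black box, so there is no in-paper argument to compare with. Judged on its own, your proposal is a plan rather than a proof, and in both routes the step that carries the content is missing.

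In the flow route, properties (a) and (b) \emph{are} the theorem, and you neither specify the semigroups nor verify either property. The natural choice is ruled out. Suppose $f_i$ evolves by the Ornstein--Uhlenbeck semigroup of $\gamma_{\Sigma_i}$ and $h_j$ by that of $\gamma_{\Gamma_j}$. Then every integral in (b) is constant in $t$, all functions converge to constants, and if (a) persisted, letting $t\to\infty$ would give the conclusion with constant $1$. That is false already for the data used to prove Theorem~\ref{t:Unify}, with $\Sigma_1=\Sigma_2=\Gamma=\Id_n$, $p\neq\lambda$ and the centered balls of Remark~\ref{rem:balls}, where the sharp constant exceeds $1$. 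So a workable flow must be non-conservative and calibrated to a (near-)extremal Gaussian datum $(A_i,B_j)$, which need not be attained. Proving (a) and (b) for such a flow is exactly the missing argument; as you anticipate, centering must control the first-order terms and log-concavity the second-order ones. The ``key lemma'' you describe (covariance of $f\gamma$ at most $\Id$, plus zero barycenter) is not stated as an inequality that would yield (b).

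The transport fallback fails for a more basic reason. Every ingredient you list (Caffarelli's theorem, the Jacobian equation, Ky Fan's log-det concavity, the Gaussian equality conditions) applies verbatim to non-centered log-concave $f_i$, yet the statement is false without centering. Take $I=J=1$, $L_1=\Id_n$, $c_1=d_1=1$, $\mathcal{Q}=0$, $\Sigma_1=2\Id_n$, $\Gamma_1=\Id_n$. Since $B\le A$ implies $\det(\Id_n+B)\le\det(\Id_n+2A)$, here ${\rm FR}_{LC}^{(\mathcal{G})}=1$. With $f_1=h_1=\mathbf{1}_K$ the conclusion reads $\gamma_{2\Id_n}(K)\le\gamma(K)$, which is Lemma~\ref{lem:monotonicity}. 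This fails when $K$ is a unit ball centered at distance $R\gg1$ from the origin, where the ratio is at least $2^{-n/2}e^{(R-1)^2/2-(R+1)^2/4}$.

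There is a second problem: the map $x\mapsto L_j(T_1x_1,\dots,T_Ix_I)$ sends $E$ into $\R^{m_j}$, which is in general lower-dimensional. It is therefore not a change of variables for $\int h_j\,d\gamma_{\Gamma_j}$. With several separately integrated $h_j$ you would also need a forward Brascamp--Lieb step that you have not supplied. The barycenter condition must enter at the core of whatever mechanism you use, and your proposal does not yet say how.
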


\subsection{Several inequalities on matrices}

Let us collect several inequalities that will be useful for simplifying the arguments in the following sections. 

\begin{lemma}[{\cite[Corollary 5.4]{MNT}}]\label{l:GCIGauss}
    For any $\mathcal{A}, \mathcal{B}, \mathcal{C} \in \mathcal{P}_n$, one has 
    $$
    {\rm det}\, (\mathcal{C})\, {\rm det}\, (\mathcal{C} + \mathcal{A} + \mathcal{B}) 
    \le 
    {\rm det}\, (\mathcal{C} + \mathcal{A} )\,
    {\rm det}\, (\mathcal{C} + \mathcal{B}). 
    $$
\end{lemma}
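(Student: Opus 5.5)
The plan is to reduce to the case $\mathcal{C} = \Id_n$ by a congruence, and then prove the resulting inequality by a direct factorization combined with monotonicity of the determinant.

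\textbf{Normalization.} Set $A := \mathcal{C}^{-\frac12}\mathcal{A}\,\mathcal{C}^{-\frac12}$ and $B := \mathcal{C}^{-\frac12}\mathcal{B}\,\mathcal{C}^{-\frac12}$; both lie in $\mathcal{P}_n$. Since $\mathcal{C} + X = \mathcal{C}^{\frac12}(\Id_n + \mathcal{C}^{-\frac12}X\mathcal{C}^{-\frac12})\mathcal{C}^{\frac12}$, each of the four determinants factors as $\det\mathcal{C}$ times the determinant of the normalized matrix. Both sides therefore carry the common factor $(\det \mathcal{C})^2$. After cancelling it, it suffices to prove
\[
\det(\Id_n + A + B) \le \det(\Id_n + A)\,\det(\Id_n + B), \qquad A,B \in \mathcal{P}_n .
\]

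\textbf{Factorization.} Write $\Id_n + A + B = (\Id_n + A)\bigl(\Id_n + (\Id_n+A)^{-1}B\bigr)$, so that
\[
\det(\Id_n + A + B) = \det(\Id_n + A)\,\det\bigl(\Id_n + (\Id_n+A)^{-1}B\bigr).
\]
By Sylvester's identity $\det(\Id_n + XY) = \det(\Id_n + YX)$, the second factor equals $\det\bigl(\Id_n + B^{\frac12}(\Id_n+A)^{-1}B^{\frac12}\bigr)$.

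\textbf{Monotonicity.} Since $A \ge 0$, we have $(\Id_n + A)^{-1} \le \Id_n$, and hence $B^{\frac12}(\Id_n+A)^{-1}B^{\frac12} \le B$. The determinant is monotone on positive definite matrices in the Loewner order (compare eigenvalues via the min-max principle). This gives $\det\bigl(\Id_n + B^{\frac12}(\Id_n+A)^{-1}B^{\frac12}\bigr) \le \det(\Id_n + B)$, which completes the proof.

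There is no genuine obstacle here. The only point needing care is the symmetrization step: $(\Id_n+A)^{-1}B$ is not symmetric, so the Loewner order cannot be applied to it directly. Sylvester's identity is what moves the problem to the symmetric matrix $B^{\frac12}(\Id_n+A)^{-1}B^{\frac12}$, where monotonicity applies.

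An alternative route is to note that $t \mapsto \log\det(\Id_n + A + tB)$ is concave in $t$. Its derivative at $t$ is $\tr\bigl((\Id_n+A+tB)^{-1}B\bigr)$, which is at most $\tr\bigl((\Id_n+tB)^{-1}B\bigr)$ because $A \ge 0$. Integrating over $t \in [0,1]$ gives the same inequality.
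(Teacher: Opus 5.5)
Your proof is correct, and both of your routes (Sylvester's identity plus Loewner monotonicity, and integrating $\frac{d}{dt}\log\det(\Id_n+A+tB)$) are complete. The paper gives no proof of this lemma and simply cites \cite{MNT}. Conceptually, the statement is the Gaussian correlation inequality tested on centered Gaussian functions. Since $g_{\mathcal{A}}g_{\mathcal{B}}=g_{\mathcal{A}+\mathcal{B}}$ and $\int g_{\mathcal{A}}\,d\gamma_{\mathcal{C}^{-1}}=\det(\Id_n+\mathcal{C}^{-1}\mathcal{A})^{-1/2}$, the inequality $\int g_{\mathcal{A}}g_{\mathcal{B}}\,d\gamma_{\mathcal{C}^{-1}}\ge\int g_{\mathcal{A}}\,d\gamma_{\mathcal{C}^{-1}}\int g_{\mathcal{B}}\,d\gamma_{\mathcal{C}^{-1}}$ becomes the lemma after squaring, inverting and multiplying through by $(\det\mathcal{C})^2$.

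Your argument makes the lemma self-contained and elementary, and it is in fact the same mechanism as the paper's own Lemma \ref{lem:det-monotonic}. Write $\det(\mathcal{C}+\mathcal{A}+\mathcal{B})=\det(\mathcal{C}+\mathcal{A})\det(\Id_n+(\mathcal{C}+\mathcal{A})^{-1}\mathcal{B})$ and $\det(\mathcal{C}+\mathcal{B})=\det\mathcal{C}\,\det(\Id_n+\mathcal{C}^{-1}\mathcal{B})$. The claim is then Lemma \ref{lem:det-monotonic} applied with $(\mathcal{C}+\mathcal{A})^{-1}\le\mathcal{C}^{-1}$. So your normalization to $\mathcal{C}=\Id_n$ is optional, and the external citation could be replaced by one line.

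The citation buys a conceptual link: this determinant inequality is the Gaussian case of a correlation-type inequality, in the spirit of the paper's method of computing ${\rm FR}_{LC}^{(\mathcal{G})}$ on Gaussians. Your alternative derivation instead exposes the structural reason: $\log\det$ has decreasing increments along the positive cone because its gradient $X\mapsto X^{-1}$ is operator-decreasing.
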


\begin{lemma} \label{lem:det-monotonic}
For any $A,B, \mathcal{A}, \mathcal{B} \in \mathcal{P}_n$, if $A \leq \mathcal{A}$ and $B \leq \mathcal{B}$, then
\[
\det(\Id_n + A B) \leq \det(\Id_n + \mathcal{A} \mathcal{B}) . 
\]
\end{lemma}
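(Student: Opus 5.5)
The plan is to move one matrix at a time, using two facts: Sylvester's determinant identity $\det(\Id_n + XY) = \det(\Id_n + YX)$, and monotonicity of the determinant with respect to the Loewner order on positive semi-definite matrices. Concretely, we will prove
\[
\det(\Id_n + AB) \le \det(\Id_n + A\mathcal{B}) \le \det(\Id_n + \mathcal{A}\mathcal{B}),
\]
where the first step increases $B$ to $\mathcal{B}$ with $A$ fixed, and the second increases $A$ to $\mathcal{A}$ with $\mathcal{B}$ fixed.

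\emph{Determinant monotonicity.} If $0 < X \le Y$, then $Y^{-1/2} X Y^{-1/2} \le \Id_n$. So all eigenvalues of this positive definite matrix lie in $(0,1]$, and
\[
\det X / \det Y = \det(Y^{-1/2} X Y^{-1/2}) \le 1.
\]

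\emph{First step.} Write $A = A^{1/2}A^{1/2}$ and apply Sylvester's identity with $X = A^{1/2}$ and $Y = A^{1/2}B$:
\[
\det(\Id_n + AB) = \det(\Id_n + A^{1/2} B A^{1/2}).
\]
Congruence preserves the Loewner order, so $B \le \mathcal{B}$ gives $A^{1/2} B A^{1/2} \le A^{1/2}\mathcal{B}A^{1/2}$. Adding $\Id_n$ keeps both sides positive definite. Determinant monotonicity then yields $\det(\Id_n + AB) \le \det(\Id_n + A\mathcal{B})$.

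\emph{Second step.} By the same identity with the roles exchanged,
\[
\det(\Id_n + A\mathcal{B}) = \det(\Id_n + \mathcal{B}A) = \det(\Id_n + \mathcal{B}^{1/2} A \mathcal{B}^{1/2}).
\]
Since $A \le \mathcal{A}$, congruence gives $\mathcal{B}^{1/2} A \mathcal{B}^{1/2} \le \mathcal{B}^{1/2} \mathcal{A} \mathcal{B}^{1/2}$. Determinant monotonicity and reversing the identity then give $\det(\Id_n + A\mathcal{B}) \le \det(\Id_n + \mathcal{A}\mathcal{B})$, which completes the chain.

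No step is a real obstacle. The only subtlety is that $AB$ is not symmetric, so monotonicity cannot be applied to $\Id_n + AB$ directly. That is why each step first symmetrizes via the similarity $AB \sim A^{1/2}BA^{1/2}$ before comparing.
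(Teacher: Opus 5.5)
Your proof is correct and is essentially the paper's argument: symmetrize via $\det(\Id_n + XY) = \det(\Id_n + YX)$, compare the congruent matrices in the Loewner order, and repeat with the roles swapped. The only differences are that you enlarge $B$ first, where the paper enlarges $A$ first, and that you spell out the monotonicity of the determinant, which the paper takes for granted.
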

\begin{proof}
It is well-known that for any $n$ by $n$ matrices $X,Y$ we have $\det(\Id_n + X Y) = \det(\Id_n + Y X)$. Therefore
\[
\det(\Id_n + A B) = \det(\Id_n + B^{1/2} A B^{1/2}) \leq \det(\Id_n + B^{1/2} \mathcal{A} B^{1/2}) = \det(\Id_n + \mathcal{A} B) ,
\]
where we used that $B^{1/2} A B^{1/2} \leq B^{1/2} \mathcal{A} B^{1/2}$. Repeating the same argument a second time, the assertion follows. 
\end{proof}

\subsection{Monotonicity}

We conclude this section by providing a proof of Lemma \ref{lem:monotonicity}. 

\begin{proof}[Proof of Lemma \ref{lem:monotonicity}]
The simplest way is to invoke the following centered Gaussian Correlation Inequality from the second and third authors' work \cite[Theorem 1.5]{NT}, extending the approach from \cite{M}. It states that for any convex sets $K, L \subset \R^n$ and $\Sigma_0, \Sigma_1, \Sigma_2 \in \mathcal{P}_n$  satisfying $\Sigma_0 \le \Sigma_1, \Sigma_2$ and 
$$
\int_K x\, d\gamma_{\Sigma_1}(x) = \int_L x\, d\gamma_{\Sigma_2}(x) =0,   
$$
it holds that 
\begin{equation} \label{eq:centered-GCI}
\gamma_{\Sigma_0} (K \cap L) \ge \gamma_{\Sigma_1}(K) \gamma_{\Sigma_2}(L). 
\end{equation}

When $\int_K x \, d\gamma_{\Sigma_+}(x) = 0$, letting  $L=\R^n$, $\Sigma_0 = \Sigma_-$ and $\Sigma_1 = \Sigma_2 = \Sigma_+$, we immediately obtain $\gamma_{\Sigma_-}(K) \geq \gamma_{\Sigma_+}(K) $. 

Next let us show the assertion under $\int_K x \, d\gamma_{\Sigma_-}(x) = 0$. 
To see this, note that in general, for any $\Sigma \in \mathcal{P}_n$ and convex $K \subset \R^n$ with $\int_K x \, d\gamma_\Sigma(x) =0$, it holds that 
\begin{equation}\label{e:TranslateCentered}
\sup_{a \in \R^n} \gamma_\Sigma(K-a) = \gamma_\Sigma(K). 
\end{equation}
Indeed, by the Pr\'{e}kopa--Leindler inequality, we see that $\Phi(a):=\gamma_\Sigma(K-a)$ is log-concave in $a \in \R^n$ and 
$$
\nabla \log \Phi(0) = \Sigma^{-1} \int_K x \, \frac{d\gamma_\Sigma(x)}{\gamma_{\Sigma}(K)} =0 
$$
by assumption. Thus $\Phi(a)$ is maximized at $a=0$, yielding \eqref{e:TranslateCentered}. 

Now let us take $b \in \R^n$ such that $\int_{K-b} x \, d\gamma_{\Sigma_+}(x)=0$. Then by \eqref{e:TranslateCentered}, $\int_K x \, d\gamma_{\Sigma_-}(x) = 0$ and the fact that we have already confirmed above, we have 
\begin{align*}
    \gamma_{\Sigma_-}(K)
    \ge
    \gamma_{\Sigma_-}(K-b)
    \ge 
    \gamma_{\Sigma_+}(K-b)
    \ge
    \gamma_{\Sigma_+}(K). 
\end{align*}
Our proof is complete.
\end{proof}

\section{Proofs of Theorem \ref{t:Unify}, Proposition \ref{p:HeteroGBMChar} and Corollary \ref{c:GaussMixDimBM}} \label{sec:3}

In this section we establish Theorem \ref{t:Unify}, Proposition \ref{p:HeteroGBMChar} and Corollary \ref{c:GaussMixDimBM}.
We first require the following lemma. 

\begin{lemma}\label{l:Diagonalize}
Let $p \in (0,1)$ and $Y,Y_1,Y_2\in \mathcal{P}_n$. Then 
\[
\sup_{X_1,X_2 \in \mathcal{P}_n}
\frac{
 \det\!\left(\Id_n+Y\bigl((1-p)X_1^{-1}+p X_2^{-1}\bigr)^{-1}\right)
}{\det(\Id_n+Y_1X_1)^{1-p}\det(\Id_n+Y_2X_2)^p}
=
\sup_{X \in \mathcal{P}_n}
\frac{
 \det\!\left(\Id_n+Y X\right)
}{\det(\Id_n+Y_1X)^{1-p}\det(\Id_n+Y_2 X)^p}
.
\]
\end{lemma}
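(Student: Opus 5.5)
The inequality ``$\geq$'' is immediate: restricting the left-hand supremum to $X_1=X_2=X$ gives $((1-p)X^{-1}+pX^{-1})^{-1}=X$, which is exactly the right-hand expression. The content of the lemma is therefore the reverse inequality ``$\leq$''.

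\emph{Reparametrization.} I plan to pass to the inverse variables $Z_i:=X_i^{-1}$ and $H:=(1-p)Z_1+pZ_2$. Using $\det(\Id_n+Y_iX_i)=\det(Z_i+Y_i)/\det Z_i$, the logarithm of the left-hand ratio becomes
\[
\Phi(Z_1,Z_2)=\bigl[\log\det(H+Y)-\log\det H\bigr]-(1-p)f_1(Z_1)-pf_2(Z_2),
\]
where $f_i(Z):=\log\det(Z+Y_i)-\log\det Z$. The right-hand side is the supremum of $\Phi$ over the diagonal $Z_1=Z_2$. The difficulty is that $f_1\neq f_2$, so a pointwise comparison, such as replacing $(Z_1,Z_2)$ by $(H,H)$, is not available. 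One has to argue at the level of suprema.

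\emph{Main step.} I would first establish compactness or a limiting reduction: if the left supremum is finite, either it is attained at some $(Z_1,Z_2)\in\mathcal{P}_n^2$, or it is approached along a degenerating sequence. In the degenerating case, eigenvalues of the $Z_i$ tend to $0$ or $\infty$, and I would split off the corresponding subspaces, reducing to a lower-dimensional instance handled by induction on $n$. At an interior maximizer I would write down the first-order conditions. Differentiating in $Z_1$ and in $Z_2$, and using $\partial_Z\log\det(Z+W)=(Z+W)^{-1}$, gives
\[
(H+Y)^{-1}-H^{-1}=(Z_1+Y_1)^{-1}-Z_1^{-1}=(Z_2+Y_2)^{-1}-Z_2^{-1}.
\]
The plan is to exploit the equality of the two right-hand expressions, denoted $-M$, together with the convexity of each $f_i$. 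The map $Z\mapsto\log\det(Z+Y_i)-\log\det Z$ is convex, by the standard convexity of Gaussian-channel mutual information in the noise covariance. From these two facts I would conclude
\[
(1-p)f_1(Z_1)+pf_2(Z_2)\;\geq\;(1-p)f_1(H)+pf_2(H).
\]
The argument: by convexity, $f_i(H)\leq f_i(Z_i)+\langle\nabla f_i(H),H-Z_i\rangle$. The stationarity condition $\nabla f_1(Z_1)=\nabla f_2(Z_2)$, combined with the constraint $(1-p)(H-Z_1)+p(H-Z_2)=0$, should kill the linear terms. This works exactly once the gradients are evaluated at the $Z_i$ rather than at $H$. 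So the careful version uses the supporting hyperplanes at $Z_i$ applied in the reverse direction, via a Lagrange-multiplier (convex duality) formulation of $\min\{(1-p)f_1(Z_1)+pf_2(Z_2):(1-p)Z_1+pZ_2=H\}$.

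\emph{Expected obstacle.} I expect the main obstacle to be precisely this duality step. The infimal convolution of $f_1,f_2$ at $H$ must be shown to be at least $(1-p)f_1(H)+pf_2(H)$ minus something controlled by the $Y$-term, so that $\Phi(Z_1,Z_2)\leq\Phi(H,H)$ or at least $\leq\sup_X\Phi(X^{-1},X^{-1})$. If the direct convexity argument fails, my fallback is to interpret both suprema as Gaussian Forward--Reverse Brascamp--Lieb constants via Theorem~\ref{t:FRBLMNT}, taking $I=2$, $J=1$ with the diagonal datum $x\mapsto(1-p)x_1+px_2$. I would then use a rotation or tensorization argument on $\R^n\oplus\R^n$ showing that centered Gaussian extremizers may be taken with $X_1=X_2$, applying Lemma~\ref{l:GCIGauss} to compare the determinants.
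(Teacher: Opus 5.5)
\textbf{There is a genuine gap: the main step runs in the wrong direction, and the case that actually matters is left unaddressed.}

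Your easy direction is fine, and so is the convexity of $f_i$. The problem is what convexity gives you. At an interior critical point you have $\nabla f_1(Z_1)=\nabla f_2(Z_2)=:G$, and convexity gives $f_i(H)\ge f_i(Z_i)+\langle G,H-Z_i\rangle$. Averaging with weights $1-p,p$ kills the linear terms and yields
\[
(1-p)f_1(Z_1)+pf_2(Z_2)\le(1-p)f_1(H)+pf_2(H).
\]
So $(Z_1,Z_2)$ minimizes $(1-p)f_1+pf_2$ on the slice $(1-p)Z_1+pZ_2=H$, and $\Phi(Z_1,Z_2)\ge\Phi(H,H)$. This is the reverse of your displayed inequality, and it is trivially true at a maximizer anyway. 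Supporting hyperplanes cannot be used ``in the reverse direction''. Your inequality holds only if $(H,H)$ is also a minimizer on the slice, which fails as soon as $\nabla f_1(H)\ne\nabla f_2(H)$; this already happens for $n=1$ with $Y_1\ne Y_2$. So no comparison with the diagonal point $(H,H)$ can prove the lemma.

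Moreover, the interior case is not where the content lies. The right-hand supremum $\mathfrak C$ equals $\max_E \det Y|_E/((\det Y_1|_E)^{1-p}(\det Y_2|_E)^p)$, with $E=\{0\}$ contributing $1$. When $\mathfrak C>1$, it is attained at no $X\in\mathcal P_n$: the $J=\emptyset$ term of the principal-minor expansion makes the H\"older bound strict. It is only approached by $X$ with eigenvalue $R$ on $E$ and $R^{-1}$ on $E^\perp$, as $R\to\infty$, and the same strictness occurs on the left.

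Thus everything sits in your ``degenerating case'', and ``split off subspaces and induct'' gives no mechanism there. The degenerating eigenspaces of $Z_1$ and $Z_2$ need not be aligned with each other or with $Y,Y_1,Y_2$, and $H$ mixes them. The FRBL fallback merely restates the lemma, since showing that Gaussian extremizers can be taken with $X_1=X_2$ \emph{is} the claim. Lemma~\ref{l:GCIGauss} does not bear on it.

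The missing idea is to compare directly with those boundary values:
\begin{enumerate}[label=(\arabic*)]
\item Simultaneously diagonalize $X_1=TT^*$ and $X_2=TDT^*$ with $D=\diag(\kappa_j)$. Then $((1-p)X_1^{-1}+pX_2^{-1})^{-1}=T\diag(r_j)T^*$ with $r_j=(1-p+p/\kappa_j)^{-1}\le\kappa_j^p$.
\item Set $\widehat Y=T^*YT$ and $\widehat Y_i=T^*Y_iT$, and expand $\det(\Id_n+\widehat Y\diag(r_j))=\sum_J\prod_{j\in J}r_j\det\widehat Y|_{E_J}$.
\item Bound each minor by $\mathfrak C(\det\widehat Y_1|_{E_J})^{1-p}(\det\widehat Y_2|_{E_J})^p$, using degenerate test matrices in the right-hand supremum.
\item Resum via AM--GM and H\"older to get $\mathfrak C\det(\Id_n+\widehat Y_1)^{1-p}\det(\Id_n+\widehat Y_2D)^p$.
\end{enumerate}
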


\begin{proof}
    We simultaneously diagonalize $X_1,X_2$ by congruence.
Namely, choose $U\in O(n)$ such that
\[
U^{*}X_1^{-1/2}X_2X_1^{-1/2}U
=D:=\diag(\kappa_1,\ldots,\kappa_n) ,
\]
and set
$
T:=X_1^{1/2}U.
$
Then we have
\[
X_1=TT^{*},\qquad X_2=TDT^{*},\qquad 
\bigl((1-p)X_1^{-1}+p X_2^{-1}\bigr)^{-1}
=T\bigl((1-p )\Id_n+p D^{-1}\bigr)^{-1}T^{*}.
\]
Note that
\[
\bigl((1-p)\Id_n+p D^{-1}\bigr)^{-1}
=\diag\!\left(\frac{1}{1-p+p/\kappa_1},\ldots,
              \frac{1}{1-p+p/\kappa_n}\right)
=:\diag(r_1,\ldots,r_n).
\]
These identities imply
\begin{align*}
&\frac{
 \det\!\left(\Id_n+Y\bigl((1-p)X_1^{-1}+p X_2^{-1}\bigr)^{-1}\right)
}{\det(\Id_n+Y_1X_1)^{1-p}\det(\Id_n+Y_2X_2)^p}
=
\frac{\det\!\left(\Id_n+\widehat Y\diag(r_1,\ldots,r_n)\right)}
 {\det(\Id_n+\widehat Y_1)^{1-p}\det(\Id_n+\widehat Y_2D)^p},
\end{align*}
where we have used that $\det(\Id_n + A B) = \det(\Id_n + B A)$ for all $n \times n$ matrices $A,B$, and have set
\[
\widehat Y:=T^{*}YT,\qquad
\widehat Y_1:=T^{*}Y_1T,\qquad
\widehat Y_2:=T^{*}Y_2T.
\]
Denoting $E_J:=\Span\{e_j:j\in J\}$ for $J \subseteq [n]$, the principal-minor expansion shows that 
\begin{equation} \label{eq:minor}
\det\!\left(\Id_n+\widehat Y\diag(r_1,\ldots,r_n)\right)
=\sum_{J\subseteq[n]} \prod_{j\in J}r_j \cdot  \det \widehat Y|_{E_J}.
\end{equation}
To estimate $\det \widehat Y|_{E_J}$, we claim that 
\begin{equation}\label{e:Inequ20Oct}
\frac{
\det \widehat Y|_{E_J}}{(\det \widehat Y_1|_{E_J})^{1-p}
(\det \widehat Y_2|_{E_J})^p}
\le 
\sup_{X \in \mathcal{P}_n}
\frac{\det(\Id_n+YX)}
 {\det(\Id_n+Y_1X)^{1-p}\det(\Id_n+Y_2 X)^p}=: \mathfrak{C}.
\end{equation}
To see this, first note that by the change of variables $\overline X=T^{-1}X(T^{*})^{-1}$, the right-hand side is equal to
$$
\sup_{\overline X \in \mathcal{P}_n}
\frac{\det(\Id_n+\widehat Y\overline X)}
 {\det(\Id_n+\widehat Y_1\overline X)^{1-p}
  \det(\Id_n+\widehat Y_2\overline X)^p}.
$$
Hence, taking 
$
\overline X
=R\sum_{j\in J}e_j\otimes e_j
+\frac1R\sum_{j\notin J}e_j\otimes e_j
$ 
and letting $R\to \infty$ gives \eqref{e:Inequ20Oct}. 

Combining (\ref{eq:minor}) with \eqref{e:Inequ20Oct} yields  
\begin{align*}
&\det\!\left(\Id_n+\widehat Y\diag(r_1,\ldots,r_n)\right)
\le
\mathfrak{C} 
\sum_{J\subseteq[n]} \prod_{j\in J}r_j \cdot (\det \widehat Y_1|_{E_J})^{1-p}
(\det \widehat Y_2|_{E_J})^p.
\end{align*}
As for $r_j$, we have 
$
r_j:=\frac{1}{1-p+p/\kappa_j}\le \kappa_j^p
$
by the AM-GM inequality. 
Plugging this above and applying H\"older's inequality, we obtain
\begin{align*}
\det\!\left(\Id_n+\widehat Y\diag(r_1,\ldots,r_n)\right)
&\le
\mathfrak{C} 
\sum_{J\subseteq[n]}
(\det \widehat Y_1|_{E_J})^{1-p}
\left(\prod_{j\in J}\kappa_j \cdot  \det \widehat Y_2|_{E_J}\right)^p\\
&\le
\mathfrak{C} 
\left( 
\sum_{J\subseteq[n]}
\det \widehat Y_1|_{E_J} \right)^{1-p}
\left(
\sum_{ J \subseteq [n]} \prod_{j\in J}\kappa_j \cdot  \det \widehat Y_2|_{E_J}\right)^p \\
 &= 
 \mathfrak{C}  
 \det ( \Id_n + \widehat{Y}_1 )^{1-p} 
 \det ( \Id_n + \widehat{Y}_2 D )^{p} .
\end{align*}
This is precisely the assertion. 
\end{proof}

\begin{proof}[Proof of Theorem \ref{t:Unify}] 
With the choice $(f_1,f_2,h_1)= ( \mathbf{1}_{K}, \mathbf{1}_L, \mathbf{1}_{ (1-\lambda)K+\lambda L })$, we use Theorem \ref{t:FRBLMNT} with the datum $(I,J)=(2,1)$, $(c_1,c_2,d_1)= (1-p,p,1)$, $\mathcal{Q}=0$, and $L_1 = \begin{pmatrix}
    (1-\lambda) \Id_n & \lambda \Id_n
\end{pmatrix} $. Thus the problem reduces to estimating ${\rm FR}_{LC}^{(\mathcal{G})}$, replacing  $(f_1,f_2,h_1)$ by centered Gaussians $(g_{A_1},g_{A_2}, g_{B})$ for $A_1,A_2,B \in \mathcal{P}_n$ satisfying
\begin{equation} \label{eq:satisfying1}
g_{A_1}(x_1)^{1-p}g_{A_2}(x_2)^p \leq g_{B}((1-\lambda) x_1 + \lambda x_2) \;\;\; \forall x_1,x_2 \in \R^n . 
\end{equation}

It is not hard to check using Lagrange multipliers that
\[
\sup_{\substack{x_1,x_2\in\R^n\\
 z=(1-\lambda)x_1+\lambda x_2}}
g_{A_1}(x_1)^{1-p}g_{A_2}(x_2)^p
=g_{B_0}(z),
\quad B_0 :=\left(
\frac{(1-\lambda)^2}{1-p}A_1^{-1}
+\frac{\lambda^2}{p}A_2^{-1}
\right)^{-1}.
\]
Therefore $B \leq B_0$, and so by Lemma \ref{lem:det-monotonic},
\begin{align*}     ({\rm FR}_{LC}^{(\mathcal{G})})^2
    & :=  \sup  \left \{
    \frac{\det\!\left(\Id_n+\Gamma B \right)}{\det(\Id_n+\Sigma_1A_1)^{1-p}\det(\Id_n+\Sigma_2A_2)^p} : \substack{\text{$A_1, A_2, B \in \mathcal{P}_n$ satisfying \eqref{eq:satisfying1}}}
    \right \}
    \\
   & = \sup_{A_1,A_2 \in \mathcal{P}_n}\frac{
\det\!\left(\Id_n+\Gamma
 \left(\frac{(1-\lambda)^2}{1-p}A_1^{-1}
       +\frac{\lambda^2}{p}A_2^{-1}\right)^{-1}\right)
}{\det(\Id_n+\Sigma_1A_1)^{1-p}\det(\Id_n+\Sigma_2A_2)^p}\\
& = \sup_{\overline A_1,\overline A_2 \in \mathcal{P}_n}
\frac{
\det\!\left(\Id_n+\Gamma
  \bigl((1-p)\overline A_1^{-1}+p\overline A_2^{-1}\bigr)^{-1}\right)
}{
\det\!\left(\Id_n+ 
  \bsig_1\overline A_1\right)^{1-p}
\det\!\left(\Id_n+ 
  \bsig_2\overline A_2\right)^p
} ,
\end{align*}
where we used the change of variables 
\begin{equation} \label{eq:bigs}
A_1=\left(\frac{1-\lambda}{1-p}\right)^2\overline A_1 ~,~ \bsig_1:=\left(\frac{1-\lambda}{1-p}\right)^2\Sigma_1  ~,~
A_2=\left(\frac{\lambda}{p}\right)^2\overline A_2 ~,~ \bsig_2:=\left(\frac{\lambda}{p}\right)^2\Sigma_2 .
\end{equation}
By Lemma \ref{l:Diagonalize}, we can reduce to the case that $\overline A_1= \overline A_2$, and hence
$$
({\rm FR}_{LC}^{(\mathcal{G})})^2= 
\sup_{ A \in \mathcal{P}_n}
\frac{
\det\!\left(\Id_n+\Gamma A
\right)
}{
\det\!\left(\Id_n+ 
  \bsig_1 A\right)^{1-p}
\det\!\left(\Id_n+ 
  \bsig_2 A\right)^p
}.
$$
Let $u_1,\ldots,u_n$ be an orthonormal eigenbasis of $A$, with corresponding
eigenvalues $a_1,\ldots,a_n>0$. For $J\subseteq[n]$, set
\[
U_J:=\Span\{u_j:j\in J\}.
\]
The principal-minor expansion gives
\begin{align*}
\det(\Id_n+\Gamma A)
&=\sum_{J\subseteq[n]}
  \prod_{j\in J}a_j \cdot \det \Gamma|_{U_J} \\
&\le
 \left(\max_{E\subseteq\R^n}
  \frac{\det \Gamma|_E}{(\det \bsig_1|_E)^{1-p}(\det \bsig_2|_E)^p}\right)
 \sum_{J\subseteq[n]}\prod_{j\in J}a_j \cdot  (\det \bsig_1|_{U_J})^{1-p}(\det \bsig_2|_{U_J})^p.
\end{align*}
By H\"older's inequality and the principal-minor expansion, we see that 
\begin{align*}
& \sum_{J\subseteq[n]} \prod_{j\in J}a_j \cdot  (\det \bsig_1|_{U_J})^{1-p}(\det \bsig_2|_{U_J})^p \\
 & \leq \brac{\sum_{J\subseteq[n]} \prod_{j\in J}a_j \cdot  \det \bsig_1|_{U_J}}^{1-p} \brac{\sum_{J\subseteq[n]} \prod_{j\in J}a_j \cdot \det \bsig_2|_{U_J}}^{p} \\
 & = \det(\Id_n+\bsig_1A)^{1-p}\det(\Id_n+\bsig_2A)^p ,
 \end{align*}
 and so putting everything together we deduce
 \[
 ({\rm FR}_{LC}^{(\mathcal{G})})^2= 
\sup_{ A \in \mathcal{P}_n}
\frac{
\det\!\left(\Id_n+\Gamma A
\right)
}{
\det\!\left(\Id_n+ 
  \bsig_1 A\right)^{1-p}
\det\!\left(\Id_n+ 
  \bsig_2 A\right)^p
} \leq \max_{E\subseteq\R^n}
  \frac{\det \Gamma|_E}{(\det \bsig_1|_E)^{1-p}(\det \bsig_2|_E)^p} . 
 \]
Recalling the definitions of $ \bsig_i$ from (\ref{eq:bigs}), we conclude that
\[
{\rm FR}_{LC}^{(\mathcal{G})} \leq \max_{E\subseteq\R^n}
 \left[
   \left(\frac{1-p}{1-\lambda}\right)^{1-p}
   \left(\frac{p}{\lambda}\right)^{p}
 \right]^{\dim E}
 \left( 
 \frac{\det \Gamma|_E}{(\det \Sigma_1|_E)^{1-p}(\det \Sigma_2|_E)^p}
 \right)^{1/2}.
\] 
\end{proof}

\begin{proof}[Proof of Proposition \ref{p:HeteroGBMChar}]
    First, let us show that \ref{it:char-ii} implies \ref{it:char-i}. 
    It suffices to show that \ref{it:char-ii} implies that for any $p \in (0,1)$, 
    \begin{equation}\label{e:EstimateOct18}
    C(p,\lambda,\boldsymbol{\Sigma},\Gamma) 
    \le
    \left[ \left( \frac{1-p}{1-\lambda} \right)^{(1-p)}\left( \frac{p}{\lambda} \right)^{p} \right]^n. 
    \end{equation}
    If so, then applying Theorem \ref{t:Unify} and optimizing in $p$ as in Corollary \ref{cor:dimensionalGBM}, \ref{it:char-i} would follows. 

    So fix $p \in (0,1)$, and let $E \subseteq \R^n$ be a linear subspace with ${\rm dim}\, E=d \geq 1$. When $d \le n-1$, it follows from \ref{it:char-ii} and the AM-GM inequality that 
    \begin{align*}
    ({\rm det}\, \Gamma|_E)^{-\frac1{2(n-d)}} 
    &\ge (1-\lambda) ({\rm det}\, \Sigma_1|_E)^{-\frac1{2(n-d)}} + \lambda  ({\rm det}\, \Sigma_2|_E)^{-\frac1{2(n-d)}}
    \\
    &\ge
    \left[
    \left( \frac{1-\lambda}{1-p} \right)^{1-p} \left( \frac{\lambda}{p} \right)^p
    \right]
    ({\rm det}\, \Sigma_1|_E)^{-\frac{1-p}{2(n-d)}}
    ({\rm det}\, \Sigma_2|_E)^{-\frac{p}{2(n-d)}}, 
    \end{align*}
    and thus 
    $$
    \frac{{\rm det}\, \Gamma|_E}{({\rm det}\, \Sigma_1|_E)^{1-p} ({\rm det}\, \Sigma_2|_E)^p}
    \le 
    \left[ \left( \frac{1-p}{1-\lambda} \right)^{(1-p)}\left( \frac{p}{\lambda} \right)^{p} \right]^{2(n-d)}. 
    $$
    Similarly, when $d=n$, \ref{it:char-ii} immediately yields ${\rm det}\, \Gamma \le \min ({\rm det}\, \Sigma_1, {\rm det}\, \Sigma_2)$. 
    Recalling the definition of $C(p,\lambda,\boldsymbol{\Sigma},\Gamma)$ in \eqref{e:FullHGBMCoeff}, the exponent $d$ on the left term combines with the exponent $n-d$ on the right term, yielding the desired exponent $n$ in \eqref{e:EstimateOct18}. 

    Next let us show that \ref{it:char-i} implies \ref{it:char-ii}. 
    To see this, again let $E \subseteq \R^n$ be a linear subspace with ${\rm dim}\, E=d \geq 1$.     Recall from Remark \ref{rem:balls} that $B_E(\eps)$ denotes the centered Euclidean ball in $E$ of radius $\eps>0$, and that $\gamma_{\Sigma}(B_E(\eps) \times E^\perp) = \gamma_{\Sigma|_E}(B_E(\eps))$ for any $\Sigma \in \mathcal{P}_n$. Thus for $\varepsilon >0$ and $p \in (0,1)$, let $K= \frac{1-p}{1-\lambda} B_E(\eps) \times E^\perp$ and $L= \frac{p}{\lambda} B_E(\eps) \times E^\perp$. Since $K,L$ are origin-symmetric and $(1-\lambda) K + \lambda L=  B_E(\eps) \times E^\perp$, we may apply \ref{it:char-i} to see that
    $$
    (1-\lambda) \gamma_{\Sigma_1|_E} \left (\frac{1-p}{1-\lambda} B_E(\eps) \right)^{1/n}
    + \lambda \gamma_{\Sigma_2|_E} \left( \frac{p}{\lambda} B_E(\eps) \right)^{1/n}
    \le
    \gamma_{\Gamma|_E}(B_E(\eps))^{1/n}. 
    $$
    Dividing both sides by $(2\pi)^{-d/(2n)}\varepsilon^{d/n}|B_E|^{1/n}$ and letting $\varepsilon \downarrow 0$ yields 
    $$
    (1-\lambda)^{1-d/n} (1-p)^{d/n} ({\rm det}\, \Sigma_1|_E)^{-1/(2n)} 
    + \lambda^{1-d/n} p^{d/n} ({\rm det}\, \Sigma_2|_E)^{-1/(2n)} 
    \le
    ({\rm det}\, \Gamma|_E)^{-1/(2n)} . 
    $$
    When $d=n$, this clearly implies that $\det \Gamma \leq \min(\det \Sigma_1,\det \Sigma_2)$. Otherwise, optimizing this inequality in $p \in (0,1)$ by selecting 
    \[
    p = \frac{\lambda ({\rm det}\, \Sigma_2|_E)^{-\frac{1}{2(n-d)}}}{(1-\lambda)  ({\rm det}\, \Sigma_1|_E)^{-\frac{1}{2(n-d)}} + \lambda  ({\rm det}\, \Sigma_2|_E)^{-\frac{1}{2(n-d)}}} 
    \]
    easily establishes \ref{it:char-ii}. 
\end{proof}

We conclude this section by establishing Corollary \ref{c:GaussMixDimBM}. 
To this end, we need the following heterovariant version of the Brunn--Minkowski-type inequality for Gaussian mixtures.

\begin{proposition}\label{p:GaussMixBM}
    Let $\lambda, p \in (0,1)$, $s, t \in (0, \infty)^n$ and let $\nu$ be a probability measure on $(0, \infty)^n$ such that its density $f_\nu$ satisfies that $\R^n \ni u \mapsto f_\nu(e^u)$ is log-concave. 
    Then for any convex sets $K, L \subset \R^n$ such that 
    $$
    \int_{K} x \, d\gamma_{\Delta(s)^{-2}\Delta(z)}(x) = \int_{L} x \, d\gamma_{\Delta(t)^{-2}\Delta(z)}(x) = 0, \;\;\; \forall\, z \in {\rm supp}(\nu),  
    $$
    it holds that 
    $$
    \gamma_{\nu}(\Delta(s) K)^{1-p} \gamma_{\nu}(\Delta(t)L)^p
    \le
    \left( \left( \frac{p}{\lambda} \right)^p \left( \frac{1-p}{1-\lambda} \right)^{1-p} \right)^{n}
    \gamma_{\nu}( \Delta(s)^{1-p}\Delta(t)^p ((1-\lambda)  K + \lambda L) ). 
    $$
\end{proposition}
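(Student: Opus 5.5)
We write each mixture measure as an integral over $z$, apply Theorem \ref{t:HGBM} pointwise in $z$ with suitable diagonal covariances, and then integrate the pointwise inequalities against $\nu$ using the Prékopa--Leindler inequality in the logarithmic variables.

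\textbf{Step 1: rewriting the three measures.} For a diagonal $\Delta(a)$ we have $\gamma_{\Delta(z)}(\Delta(a)K)=\gamma_{\Delta(a)^{-2}\Delta(z)}(K)$. Hence
\[
\gamma_\nu(\Delta(s)K)=\int \gamma_{\Delta(s)^{-2}\Delta(z)}(K)\,d\nu(z), \qquad \gamma_\nu(\Delta(t)L)=\int \gamma_{\Delta(t)^{-2}\Delta(z)}(L)\,d\nu(z),
\]
and the right-hand side equals $\int \gamma_{\Delta(s)^{-2(1-p)}\Delta(t)^{-2p}\Delta(w)}((1-\lambda)K+\lambda L)\,d\nu(w)$.

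\textbf{Step 2: pointwise application of Theorem \ref{t:HGBM}.} Fix $z_1,z_2\in{\rm supp}(\nu)$ and set
\[
\Sigma_1=\Delta(s)^{-2}\Delta(z_1), \qquad \Sigma_2=\Delta(t)^{-2}\Delta(z_2).
\]
These commute, so $\Sigma_1\sharp_p\Sigma_2=\Delta(s)^{-2(1-p)}\Delta(t)^{-2p}\Delta(z_1^{1-p}z_2^p)$, with powers taken coordinatewise. The hypothesis guarantees that $K$ is centered for $\gamma_{\Sigma_1}$ and $L$ is centered for $\gamma_{\Sigma_2}$, which is exactly condition \eqref{e:Centered}. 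Theorem \ref{t:HGBM} with $\Gamma=\Sigma_1\sharp_p\Sigma_2$ then gives
\[
F(z_1)^{1-p}G(z_2)^p\le c_n\, H(z_1^{1-p}z_2^p),
\]
where $c_n$ is the bracketed constant, $F(z)=\gamma_{\Delta(s)^{-2}\Delta(z)}(K)$, $G(z)=\gamma_{\Delta(t)^{-2}\Delta(z)}(L)$, and $H(w)=\gamma_{\Delta(s)^{-2(1-p)}\Delta(t)^{-2p}\Delta(w)}((1-\lambda)K+\lambda L)$.

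\textbf{Step 3: integrating via Prékopa--Leindler.} Change variables $z=e^u$, so that $d\nu(z)=f_\nu(e^u)e^{\sum_i u_i}\,du$. Define
\[
\phi(u)=F(e^u)f_\nu(e^u)e^{\sum u_i}, \qquad \psi(v)=G(e^v)f_\nu(e^v)e^{\sum v_i}, \qquad \chi(w)=c_nH(e^w)f_\nu(e^w)e^{\sum w_i}.
\]
Since $u\mapsto f_\nu(e^u)$ is log-concave, $f_\nu(e^u)^{1-p}f_\nu(e^v)^p\le f_\nu(e^{(1-p)u+pv})$. The exponential Jacobian factors are log-affine and combine exactly. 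Together with Step 2 this yields
\[
\phi(u)^{1-p}\psi(v)^p\le\chi((1-p)u+pv)
\]
whenever $e^u,e^v\in{\rm supp}(\nu)$. Off the support, $\phi$ or $\psi$ vanishes, so the inequality holds trivially. Prékopa--Leindler then gives $(\int\phi)^{1-p}(\int\psi)^p\le\int\chi$, which is precisely the claimed inequality.

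\textbf{Main obstacle.} The delicate point is the support and centering bookkeeping. Theorem \ref{t:HGBM} is invoked only for $z_1,z_2\in{\rm supp}(\nu)$, which is exactly where the centering hypothesis is available. The midpoint $e^{(1-p)u+pv}$ may not lie in ${\rm supp}(\nu)$, but this causes no difficulty: $H$ carries no centering requirement, and the factor $f_\nu$ in $\chi$ handles that point automatically. One should also check measurability, and that $\nu$ having a density justifies the change of variables. Both are routine.
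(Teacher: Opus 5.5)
Your proposal is correct and follows the paper's proof essentially step for step. Like the paper, you apply Theorem \ref{t:HGBM} pointwise with the commuting diagonal covariances $\Delta(s)^{-2}\Delta(e^u)$ and $\Delta(t)^{-2}\Delta(e^v)$, taking $\Gamma$ to be their $p$-geometric mean, and then integrate via Pr\'ekopa--Leindler in the logarithmic variables, using the log-concavity of $u\mapsto f_\nu(e^u)$ and the log-affine Jacobian; your handling of the support (centering is needed only at $e^u,e^v\in{\rm supp}(\nu)$, nothing at the midpoint) matches the paper's as well.
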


\begin{proof}
For $u, v \in \R^n$, denote
\begin{align*}
 &\Sigma_1 = \Delta(s)^{-2}\Delta(e^u) ~,~ \Sigma_2= \Delta(t)^{-2}\Delta(e^v) ~,~\\
 & \Gamma = \Sigma_1 \sharp_p \Sigma_2 = \Delta(s)^{-2(1-p)} \Delta(t)^{-2p} \Delta(e^{(1-p)u+pv}). 
 \end{align*}
    Then whenever $e^u, e^v \in {\rm supp}(\nu)$, Theorem \ref{t:BMCov} implies that 
    \begin{align*}
    &\gamma_{\Delta(s)^{-2} \Delta(e^u)}(K)^{1-p} \gamma_{\Delta(t)^{-2} \Delta(e^v)}(L)^p
    \\
    &\le
    \left( \left( \frac{p}{\lambda} \right)^p \left( \frac{1-p}{1-\lambda} \right)^{1-p} \right)^{n}
    \gamma_{\Delta(s)^{-2(1-p)} \Delta(t)^{-2p} \Delta(e^{(1-p)u + p v})}((1-\lambda) K + \lambda L). 
    \end{align*}
    This can be rewritten as 
    \begin{align}\label{e:partsOfPL}
    &\gamma_{\Delta(e^u)}(\Delta(s) K)^{1-p} \gamma_{\Delta(e^v)}(\Delta(t) L)^p
    \\
    &\le
    \left( \left( \frac{p}{\lambda} \right)^p \left( \frac{1-p}{1-\lambda} \right)^{1-p} \right)^{n}
    \gamma_{\Delta(e^{(1-p)u + p v})}\brac{\Delta(s)^{1-p} \Delta(t)^p ( (1-\lambda) K + \lambda L) }. \notag
    \end{align}
    For $u,v,w \in \R^n$, define the following functions $f,g,h : \R^n \rightarrow \R_+$:
    $$
    f(u) := \gamma_{\Delta(e^u)}(\Delta(s) K)f_\nu(e^u) e^{\sum_{i=1}^n u_i}, 
    \;\;
    g(v) := \gamma_{\Delta(e^v)}(\Delta(t) L)f_\nu(e^v) e^{\sum_{i=1}^n v_i}, 
    $$
    and 
    $$
    h(w) := \left( \left( \frac{p}{\lambda} \right)^p \left( \frac{1-p}{1-\lambda} \right)^{1-p} \right)^{n} \gamma_{\Delta(e^w)}(\Delta(s)^{1-p} \Delta(t)^p ( (1-\lambda)K + \lambda L) ) f_\nu(e^w) e^{\sum_{i=1}^n w_i}.
    $$
    Then \eqref{e:partsOfPL} and log-concavity of $\R^n \ni u \mapsto f_\nu (e^u)$ implies that 
    $$
    f(u)^{1-p} g(v)^p \le h((1-p)u + pv), \;\;\;u, v \in \R^n. 
    $$
    Indeed, if  $f_\nu(e^u) = 0$ or $f_\nu(e^v)=0$ then the above inequality holds trivially; otherwise $e^u,e^v\in\operatorname{supp}(\nu)$, so \eqref{e:partsOfPL} applies. Thus the Pr\'{e}kopa--Leindler inequality \cite{AGA-Book-I} yields 
    $$
    \left( \int_{\R^n} f(u) \, du \right)^{1-p} \left( \int_{\R^n} g(v) \, dv \right)^{p}
    \le
    \int_{\R^n} h(w) \, dw.   
    $$
    Changing variables $\tilde u = e^u$ (with Jacobian $d\tilde u = e^{\sum_{i=1}^n u_i} du$) and similarly for $v$ and $w$ leads to our desired inequality. 
\end{proof}

\begin{proof}[Proof of Corollary \ref{c:GaussMixDimBM}]
Let $K,L \subset \R^n$ be convex sets satisfying 
    $$
    \int_{K} x \, d\gamma_{\Delta(z)}(x) = \int_{L} x \, d\gamma_{\Delta(z)}(x) = 0, \;\;\; \forall\, z \in {\rm supp}(\nu). 
    $$
    By Proposition \ref{p:GaussMixBM} applied with $\Delta(s)=\Delta(t) = \Id_n$, we have for any $p,\lambda \in (0,1)$, 
    $$
    \gamma_{\nu}(K)^{1-p} \gamma_{\nu}(L)^p
    \le
    \left( \left( \frac{p}{\lambda} \right)^p \left( \frac{1-p}{1-\lambda} \right)^{1-p} \right)^{n}
    \gamma_{\nu}( (1-\lambda)  K + \lambda L) . 
    $$
    The desired assertion now follows by optimizing over $p \in (0,1)$, as in the proof of Corollary \ref{cor:dimensionalGBM} from the Introduction. 
\end{proof}

\section{Proofs of Theorem \ref{t:TehCov} and Corollary \ref{c:TehGaussMix}} \label{sec:4}

As in the previous section, the proof of Theorem \ref{t:TehCov} is reduced, thanks to Theorem \ref{t:FRBLMNT}, to the study of the following problem in linear algebra.  

\begin{lemma}\label{l:TehCovMat}
    Let $\Sigma_1, \Sigma_2, \Gamma_1, \Gamma_2 \in \mathcal{P}_n$. 
    Suppose that 
    \begin{enumerate}[label=(\roman*)]
    \item \label{it:i}  $\Sigma_1 \le \Gamma_2$, $\Sigma_2 \le \Gamma_2$; \smallskip
    \item \label{it:ii}  $\Gamma_1^{-1} + \Gamma_2^{-1} \le \Sigma_1^{-1} + \Sigma_2^{-1}$; and \smallskip
    \item \label{it:iii} ${\rm det}\, \Sigma_1 \, {\rm det}\, \Sigma_2 \ge {\rm det}\, \Gamma_1  \, {\rm det}\, \Gamma_2$.  
    \end{enumerate}
    Then for any $A_1, A_2, C \in \mathcal{P}_n$ and 
    $B = \begin{pmatrix}
        B_1 & B_2 \\ B_2^T & B_4
    \end{pmatrix}\in \mathcal{P}_{2n}$ 
    such that 
    \begin{equation}\label{e:TehCovMatAss}
        \begin{pmatrix}
            A_1  & 0 \\
            0 & A_2
        \end{pmatrix}
        \ge B + 
        \begin{pmatrix}
            C  & C \\
            C & C
        \end{pmatrix}, 
    \end{equation}
    it holds that 
    $$
    {\rm det}\, ({\rm Id}_n + \Gamma_1 \bar B)
    {\rm det}\, ({\rm Id}_n + \Gamma_2 C)
    \le
    {\rm det}\, ({\rm Id}_n + \Sigma_1 A_1)
    {\rm det}\, ({\rm Id}_n + \Sigma_2 A_2),  
    $$
    where $\bar B := B_1 + B_2 + B_2^T + B_4$. 
\end{lemma}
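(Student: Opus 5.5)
The plan is to reduce everything to determinants of inverse covariances and then treat it as a Schur-complement/Fischer-type inequality for $2n\times 2n$ block matrices, with Lemma \ref{l:GCIGauss} and Lemma \ref{lem:det-monotonic} as the main tools.

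\emph{Step 1: normalise the covariance conditions.} The Remark following Theorem \ref{t:TehCov} shows that \ref{it:i}--\ref{it:iii} force
\[
\Gamma_1\le\Sigma_1,\Sigma_2, \qquad \Gamma_1^{-1}+\Gamma_2^{-1}=\Sigma_1^{-1}+\Sigma_2^{-1}, \qquad \det\Sigma_1\det\Sigma_2=\det\Gamma_1\det\Gamma_2 .
\]
Write $\det(\Id_n+\Sigma A)=\det\Sigma\,\det(\Sigma^{-1}+A)$. Using the determinant identity, the claim becomes
\[
\det(\Gamma_1^{-1}+\bar B)\,\det(\Gamma_2^{-1}+C)\le\det(\Sigma_1^{-1}+A_1)\,\det(\Sigma_2^{-1}+A_2).
\]

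\emph{Step 2: pass to a $2n\times 2n$ matrix.} The right-hand side is $\det\mathcal{M}$ with $\mathcal{M}:=\diag(\Sigma_1^{-1}+A_1,\Sigma_2^{-1}+A_2)$. By \eqref{e:TehCovMatAss} and monotonicity of the determinant,
\[
\det\mathcal{M}\ \ge\ \det\mathcal{N},\qquad \mathcal{N}:=\diag(\Sigma_1^{-1},\Sigma_2^{-1})+B+\begin{pmatrix}C&C\\C&C\end{pmatrix}.
\]
So it suffices to bound $\det\mathcal{N}$ from below.

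\emph{Step 3: change coordinates and split by a Schur complement.} I would apply the unimodular congruence $(x_1,x_2)=(u+w,\,u-w)$, suitably normalised so that the determinant is unchanged up to a constant. Evaluated on the $u$-direction, the form of $B$ gives $\bar B$, the $C$-block gives a multiple of $C$, and the $\Sigma$-part gives $\Sigma_1^{-1}+\Sigma_2^{-1}=\Gamma_1^{-1}+\Gamma_2^{-1}$.

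The aim is to write $\det\mathcal{N}$ as $\det(\text{one diagonal block})\times\det(\text{Schur complement})$. One factor should dominate $\Gamma_1^{-1}+\bar B$ and the other $\Gamma_2^{-1}+C$. The mixed terms would be absorbed using $\Gamma_1\le\Sigma_i$, i.e.\ $\Sigma_i^{-1}-\Gamma_1^{-1}\le 0$, which is what makes the cross contributions have the right sign.

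If a direct Schur argument leaves a residual product, I would close it with Lemma \ref{l:GCIGauss}, applied with $\mathcal{C}$ a Gaussian part and $\mathcal{A},\mathcal{B}$ the two positive pieces (the $\bar B$-part and the $C$-part). This gives
\[
\det\mathcal{C}\,\det(\mathcal{C}+\mathcal{A}+\mathcal{B})\le\det(\mathcal{C}+\mathcal{A})\det(\mathcal{C}+\mathcal{B}),
\]
which is precisely the kind of splitting of a sum into a product that is needed.

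\emph{Main obstacle.} The hard step is Step 3: finding the right block change of variables so that both the determinant identity from Step 1 and the ordering $\Gamma_1\le\Sigma_i$ are used in the correct direction. In particular, one has to handle $B_2\ne 0$: $B$ couples $x_1$ and $x_2$, and only the combination $\bar B$ survives in the final inequality.

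My first attempt would be to first reduce to the extremal case of equality in \eqref{e:TehCovMatAss}. Monotonicity on the left is available via Lemma \ref{lem:det-monotonic}, so one may enlarge $\bar B$ and $C$. After that reduction, I would compute both sides explicitly by Schur complement and compare them.
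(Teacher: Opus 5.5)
Your Step~2 is where the argument breaks, and no choice of coordinates in Step~3 can repair it. After replacing $\mathcal{M}$ by $\mathcal{N}$, you still have to prove $\det(\Gamma_1^{-1}+\bar B)\det(\Gamma_2^{-1}+C)\le\det\mathcal{N}$. In this inequality $A_1,A_2$ no longer appear, and it is false. Take $n=1$, $\Sigma_1=\Sigma_2=\Gamma_1=\Gamma_2=1$ (so (i)--(iii) hold), $C=1$ and $B=\begin{pmatrix}1+\varepsilon&1\\1&1+\varepsilon\end{pmatrix}\in\mathcal{P}_2$. Then $\bar B=4+2\varepsilon$ and $\mathcal{N}=\begin{pmatrix}3+\varepsilon&2\\2&3+\varepsilon\end{pmatrix}$. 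So $\det\mathcal{N}=(1+\varepsilon)(5+\varepsilon)\to 5$, whereas $(1+\bar B)(1+C)\to 10$. The lemma itself is not threatened, since every admissible pair $A_1,A_2$ forces $\det\mathcal{M}\ge 25$.

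Your own congruence shows why. The block $\begin{pmatrix}C&C\\C&C\end{pmatrix}$ vanishes on vectors $(w,-w)$, so all of $C$ lands in the $u$-block together with $\bar B$. In orthonormal coordinates the $w$-block is $\frac12\bigl(\Sigma_1^{-1}+\Sigma_2^{-1}+B_1-B_2-B_2^T+B_4\bigr)$. This contains no $C$ and can be essentially $\frac12(\Gamma_1^{-1}+\Gamma_2^{-1})$, so nothing is left to dominate $\Gamma_2^{-1}+C$. In a correct argument the $C$ in the second factor comes from $A_1,A_2\ge C$, that is, from the block-diagonal structure of $\diag(A_1,A_2)$ that Step~2 discards.

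Your fallback use of Lemma~\ref{l:GCIGauss} also points the wrong way. If $\mathcal{A},\mathcal{B}$ are the $\bar B$- and $C$-parts, the target-shaped product $\det(\mathcal{C}+\mathcal{A})\det(\mathcal{C}+\mathcal{B})$ sits on the \emph{larger} side of that lemma. It can therefore only bound your left-hand side from below.

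The missing idea is to keep the product $\det(\Sigma_1^{-1}+A_1)\det(\Sigma_2^{-1}+A_2)$ intact and use \eqref{e:TehCovMatAss} only along special directions. Testing on $(x,x)$ gives $\bar B\le A_1+A_2-4C$; the part of your fallback that enlarges $B$ to equality is valid and amounts to exactly this. Testing on $(x,0)$ and $(0,x)$ gives $A_1,A_2>C$. Now set
\[
\mathcal{C}=\Gamma_2^{-1}+C,\qquad \mathcal{A}=\Sigma_1^{-1}-\Gamma_2^{-1}+A_1-C,\qquad \mathcal{B}=\Sigma_2^{-1}-\Gamma_2^{-1}+A_2-C,
\]
which are positive definite by (i) and $A_i>C$. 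By (ii), $\Gamma_1^{-1}+\bar B\le\Gamma_1^{-1}+A_1+A_2-C\le\mathcal{C}+\mathcal{A}+\mathcal{B}$. So Lemma~\ref{l:GCIGauss}, used in the correct direction, gives
\[
\det(\Gamma_1^{-1}+\bar B)\det(\Gamma_2^{-1}+C)\le\det\mathcal{C}\,\det(\mathcal{C}+\mathcal{A}+\mathcal{B})\le\det(\Sigma_1^{-1}+A_1)\det(\Sigma_2^{-1}+A_2),
\]
and (iii) converts this back to the stated inequality. This is the paper's proof.

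Even after the extremal reduction, a blockwise Schur comparison does not close by itself. With $P=\Sigma_1^{-1}+A_1$ and $Q=\Sigma_2^{-1}+A_2$ one has $\det P\det Q=\det(P+Q)\det\bigl((P^{-1}+Q^{-1})^{-1}\bigr)$, and the harmonic factor need not dominate $\Gamma_2^{-1}+C$. A genuine trade-off between the two factors is needed, and that is exactly what Lemma~\ref{l:GCIGauss} with the above assignment provides.
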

\begin{proof}
    First, note that \eqref{e:TehCovMatAss} and $C>0$ imply      $$
    \bar B \le A_1 + A_2 - 4 C \le A_1 + A_2 - C. 
    $$
    Hence by Lemma \ref{lem:det-monotonic}, 
    \begin{align*}
        &{\rm det}\, ({\rm Id}_n + \Gamma_1 \bar B)
    {\rm det}\, ({\rm Id}_n + \Gamma_2 C)
    \\
    &
    \le
    {\rm det}\, ({\rm Id}_n + \Gamma_1 (A_1 + A_2 - C))
    {\rm det}\, ({\rm Id}_n + \Gamma_2 C)
    \\
    &=
    {\rm det}\, (\Gamma_1\Gamma_2)\, {\rm det}\, (\Gamma_1^{-1} + A_1 + A_2 - C)
    {\rm det}\, (\Gamma_2^{-1} +  C)
    \\
    &\le
    {\rm det}\, (\Gamma_1\Gamma_2)\, {\rm det}\, ( (\Gamma_2^{-1} +  C) + (\Sigma_1^{-1} - \Gamma_2^{-1} + A_1 - C) + (\Sigma_2^{-1} - \Gamma_2^{-1} + A_2 - C) )\,
    {\rm det}\, (\Gamma_2^{-1} +  C), 
    \end{align*}
    where the last inequality follows from \ref{it:ii}.
    Let  
    $$
    \mathcal{A}
    :=
    \Sigma_1^{-1} - \Gamma_2^{-1} + A_1 - C, 
    \;\;
    \mathcal{B}
    :=
    \Sigma_2^{-1} - \Gamma_2^{-1} + A_2 - C, 
    \;\;
    \mathcal{C}
    := \Gamma_2^{-1} +  C. 
    $$
    Note that  $\mathcal{A}, \mathcal{B}, \mathcal{C} \in \mathcal{P}_n$ by \ref{it:i} and by $A_1, A_2 \ge C$ which follows from \eqref{e:TehCovMatAss}. 
    Thus we may apply Lemma \ref{l:GCIGauss} to see that 
    \begin{align*}
        &{\rm det}\, ({\rm Id}_n + \Gamma_1 \overline{B})
    {\rm det}\, ({\rm Id}_n + \Gamma_2 C)
    \\
    &\le
    {\rm det}\, (\Gamma_1)\, {\rm det}\, (\Gamma_2)\, {\rm det}\, ( \Sigma_1^{-1}  + A_1 )\,
    {\rm det}\, ( \Sigma_2^{-1} + A_2 )
    \\
    &=
    \frac{{\rm det}\, (\Gamma_1)\, {\rm det}\, (\Gamma_2)}{{\rm det}\, (\Sigma_1)\, {\rm det}\, (\Sigma_2)} {\rm det}\, ( {\rm Id}_n  + \Sigma_1 A_1 )\,
    {\rm det}\, ( {\rm Id}_n + \Sigma_2 A_2 )
    \\
    &\le
    {\rm det}\, ( {\rm Id}_n  + \Sigma_1 A_1 )\,
    {\rm det}\, ( {\rm Id}_n + \Sigma_2 A_2 ), 
    \end{align*}
    where the last inequality follows from \ref{it:iii}. 
    This concludes the proof.  
\end{proof}

\begin{proof}[Proof of Theorem \ref{t:TehCov}]
    Put $f_1=\mathbf{1}_K$, $f_2=\mathbf{1}_L$, $h_1=\mathbf{1}_{K \times L}$ and $h_2=\mathbf{1}_{K+L}$, and let $L_1 = {\rm Id}_{2n}$ and $L_2(x,y) := x+y$ for $x,y\in \R^n$. 
    Let also 
    $$
    \widetilde{\Gamma}_1 
    :=
    \begin{pmatrix}
        \Gamma_1 & \Gamma_1 \\
        \Gamma_1 & \Gamma_1
    \end{pmatrix}. 
    $$
    Then it holds that 
    $$
    f_1(x_1) f_2(x_2) \le h_1(x_1, x_2) h_2(L_2(x_1,x_2)), \;\;\; \forall x_1,x_2 \in \R^n.  
    $$
    Note also that $f_1, f_2, h_2 : \R^n \rightarrow \R_+$ and $h_1 : \R^{2n} \rightarrow \R_+$ are log-concave and satisfy 
    $$
    \int_{\R^n} x f_1(x) \, d\gamma_{\Sigma_1}(x)
    =
    \int_{\R^n} x f_2(x) \, d\gamma_{\Sigma_2}(x)
    =0
    $$
    by the assumption. 
    Consequently, Theorem \ref{t:FRBLMNT} confirms that
    $$
    \int_{\R^n} f_1\, d\gamma_{\Sigma_1} \int_{\R^n} f_2\, d\gamma_{\Sigma_2} 
    \le
    {\rm FR}_{LC}^{(\mathcal{G})}
    \int_{\R^n \times \R^n} h_1\, d\gamma_{\widetilde{\Gamma}_1}
    \int_{\R^n} h_2\, d\gamma_{\Gamma_2}, 
    $$
    where 
    $$
    {\rm FR}_{LC}^{(\mathcal{G})}
    =
    \sup
    \left\{
    \frac{{\rm det}\, ({\rm Id}_{2n} + \widetilde{\Gamma}_1 B)^{1/2} {\rm det}\, ({\rm Id}_n + \Gamma_2 C)^{1/2}}{{\rm det}\, ({\rm Id}_n + \Sigma_1 A_1)^{1/2}
    {\rm det}\, ({\rm Id}_n + \Sigma_2 A_2)^{1/2}}
    \, :\, \substack{\text{$A_1, A_2, C \in \mathcal{P}_n, B \in \mathcal{P}_{2n}$} \\ \text{ satisfying \eqref{e:TehCovMatAss}}}
    \right\} . 
    $$
    Since 
    $$
    {\rm det}\, ({\rm Id}_{2n} + \widetilde{\Gamma}_1 B)
    =
    {\rm det}\, ({\rm Id}_{n} + \Gamma_1 \overline{B}), 
    $$
    Lemma \ref{l:TehCovMat} verifies that 
    $
    {\rm FR}_{LC}^{(\mathcal{G})}
    \le
    1. 
    $
    Since
    $$
    \int_{\R^n} f_1\, d\gamma_{\Sigma_1}
    =
    \gamma_{\Sigma_1}(K), 
    \;\;\;
    \int_{\R^n} f_2\, d\gamma_{\Sigma_2}
    =
    \gamma_{\Sigma_2}(L), 
    \;\;\;
    \int_{\R^n} h_2\, d\gamma_{\Gamma_2}
    =
    \gamma_{\Gamma_2}(K+L) 
    $$
    and 
    $$
    \int_{\R^n \times \R^n} h_1\, d\gamma_{\widetilde{\Gamma}_1}
    =
    \int_{\R^n} h_1(y,y) \, d\gamma_{\Gamma_1}(y)
    =
    \gamma_{\Gamma_1}(K \cap L),  
    $$
    the proof is complete. 
\end{proof}

We now proceed to show Corollary \ref{c:TehGaussMix}. 
To this end, let us recall the four functions theorem \cite{AD} (see \cite[Lemma 9.3]{MNT} for the trivial extension from $\R^n$ to $(0,\infty)^n$). 
\begin{theorem}[Four functions theorem]\label{t:FourThm}
    Let $f_1, f_2, h_1, h_2$ be nonnegative and integrable functions on $(0, \infty)^n$ satisfying
    $$
    f_1(s) f_2(t) \le h_1(s \wedge t) h_2(s \vee t), \;\;\; \forall s, t \in (0, \infty)^n. 
    $$
    Then one has 
    $$
    \int_{(0, \infty)^n} f_1(s)\, ds \int_{(0, \infty)^n} f_2(t)\, dt
    \le
    \int_{(0, \infty)^n} h_1(r) \, dr \int_{(0, \infty)^n} h_2(r)\, dr.   
    $$
\end{theorem}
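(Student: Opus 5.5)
The plan is to prove Theorem \ref{t:FourThm} by induction on the dimension $n$, using Fubini--Tonelli to peel off one coordinate at a time. The whole weight of the argument then rests on the one-dimensional case.

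\emph{Step 1: the case $n=1$.} Given $f_1,f_2,h_1,h_2\ge0$ on $(0,\infty)$ with $f_1(s)f_2(t)\le h_1(\min(s,t))h_2(\max(s,t))$, split both products of integrals over the region $\{s<t\}$; the diagonal is a null set. This gives
\[
\int f_1\int f_2=\iint_{s<t}\bigl[f_1(s)f_2(t)+f_1(t)f_2(s)\bigr]\,ds\,dt ,
\qquad
\int h_1\int h_2=\iint_{s<t}\bigl[h_1(s)h_2(t)+h_1(t)h_2(s)\bigr]\,ds\,dt .
\]
It therefore suffices to prove, for each fixed $s<t$, that $a+b\le c+d$, where
\[
a=f_1(s)f_2(t),\quad b=f_1(t)f_2(s),\quad c=h_1(s)h_2(t),\quad d=h_1(t)h_2(s).
\]
The hypothesis gives three facts:
\begin{itemize}
\item $a\le c$ and $b\le c$, since $s\wedge t=s$ and $s\vee t=t$;
\item $f_1(s)f_2(s)\le h_1(s)h_2(s)$ and $f_1(t)f_2(t)\le h_1(t)h_2(t)$, from the diagonal cases of the hypothesis;
\item hence, multiplying the last two, $ab\le cd$.
\end{itemize}
If $c=0$ then $a=b=0$ and there is nothing to prove. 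Otherwise
\[
c+d-a-b\;\ge\; c+\frac{ab}{c}-a-b\;=\;\frac{(c-a)(c-b)}{c}\;\ge\;0 .
\]
This elementary inequality is the only real content of the proof, and I expect it to be the main (though mild) obstacle.

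\emph{Step 2: induction.} Assume the result holds in dimension $n-1$. Write $s=(s',s_n)$ and set
\[
F_i(s')=\int_0^\infty f_i(s',s_n)\,ds_n,\qquad H_i(r')=\int_0^\infty h_i(r',r_n)\,dr_n .
\]
These are measurable and integrable by Tonelli. Fix $s',t'\in(0,\infty)^{n-1}$ and apply Step 1 to the one-variable functions
\[
f_1(s',\cdot),\quad f_2(t',\cdot),\quad h_1(s'\wedge t',\cdot),\quad h_2(s'\vee t',\cdot).
\]
The needed hypothesis holds because $(s',s_n)\wedge(t',t_n)=(s'\wedge t',\min(s_n,t_n))$, and similarly for $\vee$. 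This yields
\[
F_1(s')F_2(t')\le H_1(s'\wedge t')\,H_2(s'\vee t').
\]
The $(n-1)$-dimensional case then gives $\int F_1\int F_2\le\int H_1\int H_2$, which by Fubini is exactly the claim in dimension $n$.

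The only technical care is measurability and integrability, which Tonelli handles since all functions are nonnegative, together with the fact that $(0,\infty)$ is a lattice closed under coordinatewise $\min$ and $\max$. No structure beyond this is used, so the statement on $(0,\infty)^n$ is proved exactly as on $\R^n$.
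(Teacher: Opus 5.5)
Your argument is correct. It differs from the paper only because the paper does not prove Theorem~\ref{t:FourThm}: it quotes the Ahlswede--Daykin theorem \cite{AD} and refers to \cite[Lemma 9.3]{MNT} for the passage from $\R^n$ to $(0,\infty)^n$. That passage is indeed trivial, e.g.\ by extending all four functions by zero, since the hypothesis then holds automatically once $s$ or $t$ leaves the orthant, and the orthant is closed under $\wedge$ and $\vee$.

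What you wrote is the classical Ahlswede--Daykin two-point argument transplanted to Lebesgue measure on the line. The pairs $s<t$ play the role of the two mixed terms, and your lemma ($a,b\le c$ and $ab\le cd$ imply $a+b\le c+d$) is exactly the one used there. The diagonal is null and so invisible in the integrals, but it still supplies $ab\le cd$. Fubini then replaces the induction over product lattices. Your route gives a self-contained proof that runs directly on the orthant with no extension step; the paper's route gives a one-line citation.

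The one place where ``Tonelli handles it'' is too quick is the induction step.
\begin{enumerate}[label=(\alph*)]
\item The marginals $F_i,H_i$ can equal $+\infty$ on null sets.
\item If $f_i,h_i$ are only Lebesgue measurable, then $F_i(s')$ is defined only for a.e.\ $s'$. You would then obtain $F_1(s')F_2(t')\le H_1(s'\wedge t')H_2(s'\vee t')$ only off a null set of pairs.
\item That is not enough. Step~1, applied inside the $(n-1)$-dimensional case, uses the hypothesis at pairs $(s',t')$ sharing a coordinate, and these form a null set.
\end{enumerate}

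The clean fix is to prove the statement for all Borel functions with values in $[0,\infty]$; integrability is never used. With the convention $0\cdot\infty=0$, Tonelli still gives the product formula. Your inequality $a+b\le c+d$ is trivial if $c$ or $d$ is infinite, and $c<\infty$ forces $a,b<\infty$. The Lebesgue measurable case then follows by replacing each $f_i$ by a Borel minorant and each $h_i$ by a Borel majorant equal to it a.e. This preserves both the pointwise hypothesis and all four integrals.
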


\begin{proof}[Proof of Corollary \ref{c:TehGaussMix}]
    For $s, t \in (0, \infty)^n$, put $\Sigma_1 = \Delta(s)$, $\Sigma_2= \Delta(t)$, $\Gamma_1 = \Delta(s \wedge t)$ and $\Gamma_2 = \Delta( s \vee t)$. 
    Then we see that 
    $$
    \Sigma_1 \le \Gamma_2, \;\;\;
    \Sigma_2 \le \Gamma_2, \;\;\;
    \Gamma_1^{-1} + \Gamma_2^{-1} = \Sigma_1^{-1} + \Sigma_2^{-1}, 
    \;\;\;
    {\rm det}\, \Sigma_1 {\rm det}\, \Sigma_2 = {\rm det}\, \Gamma_1 {\rm det}\, \Gamma_2.  
    $$
    Hence we may apply Theorem \ref{t:TehCov} to deduce that 
    \begin{equation}\label{e:PartsOfFour}
    \gamma_{\Delta(s)}(K) \gamma_{\Delta(t)}(L)
    \le
    \gamma_{\Delta(s \wedge t)}(K \cap L) \gamma_{\Delta (s \vee t)}(K + L) 
    \end{equation}
    whenever 
    $$
    \int_{K} x \, d\gamma_{\Delta(s)}(x) = \int_{L} x \, d\gamma_{\Delta(t)}(x) = 0, \;\;\; \forall s,t \in {\rm supp}(\nu).
    $$
    Let us now define the functions $f_1, f_2, h_1, h_2$ on $(0, \infty)^n$ by 
    $$
    f_1(s) := \gamma_{\Delta(s)}(K) f_\nu(s), 
    \;\;\;
    f_2(t) := \gamma_{\Delta(t)}(L) f_\nu(t), 
    $$
    and 
    $$
    h_1(r) := \gamma_{\Delta(r)}(K \cap L) f_\nu(r), 
    \;\;\;
    h_2(r) := \gamma_{\Delta(r)}(K+L) f_\nu(r). 
    $$
    By \eqref{e:PartsOfFour} and log-supermodularity of $f_\nu$, we have 
    $$
    f_1(s) f_2(t) \le h_1(s \wedge t) h_2 (s \vee t), \;\;\; \forall s,t \in (0, \infty)^n. 
    $$
    Therefore Theorem \ref{t:FourThm} yields 
    $$
    \int_{(0, \infty)^n} f_1(s)\, ds \int_{(0, \infty)^n} f_2(t) \, dt
    \le
    \int_{(0, \infty)^n} h_1(r) \, dr \int_{(0, \infty)^n} h_2(r)\, dr,    
    $$
    establishing our desired inequality and concluding the proof. 
\end{proof}

\end{document}